\theoremstyle{definition}
\newtheorem{Def}{Definition}[section]
\newtheorem{obs}[Def]{Remark}
\theoremstyle{theorem}
\newtheorem{prop}[Def]{Proposition}
\newtheorem{lema}[Def]{Lemma}
\newtheorem{corollario}[Def]{Corollary}
\newtheorem{teo}[Def]{Theorem}
  \definecolor{colore}{cmyk}{0,1,0.6,0}
  \definecolor{coloregen}{cmyk}{0.7,0,1,0}
  \definecolor{coloresimo}{cmyk}{1,0.6,0,0}
  \definecolor{coloreconnor}{cmyk}{0,0.3,1,0}
  \definecolor{colore}{cmyk}{0,0,0,1}
  \definecolor{coloregen}{cmyk}{0,0,0,1}
  \definecolor{coloresimo}{cmyk}{0,0,0,1}
  \definecolor{coloreconnor}{cmyk}{0,0,0,1}
\title{The robustness of the generalized Gini index}
\author{M. Franciosi}
\author{S. Settepanella}
\author{A. Terni}
\address{Department of Mathematics, Pisa University, Pisa, Italy}
\address{Department of Mathematics, Hokkaido University, Japan}
\address{Department of Mathematics, Pisa University, Pisa, Italy}
\email{marco.franciosi@unipi.it}
\email{s.settepanella@math.sci.hokudai.ac.jp}
\email{terniale@gmail.com}
\thanks{}
\subjclass[2010]{ 28B05, 28A78 }
\keywords{Gini Index, zonoid,  empirical distribution, Hausdorff metric}
\begin{document}

\maketitle

%%%%%%%%%%%%%%%%%%%%%%%%%%%%%%%%%%%%%%%%%%%%%%%%%%%%%%%%%%%%%%%%%%%%%%

\begin{abstract}
In  this paper we introduce a map $\Phi$, which we call \textit{zonoid map}, from  the space of all non-negative, finite Borel measures on $\mathbb{R}^n$ with finite first moment to the space of zonoids of $\mathbb{R}^n$. This map, connecting Borel measure theory with zonoids theory, allows us to  slightly generalize the  Gini volume introduced, in the contest of Industrial Economics, by  Dosi, Grazzi, Marengo and second author in 2016. This volume, based on the geometric notion of zonoid, is introduced as a measure of heterogeneity among firms in an industry and turned out to be quite interesting index as it is a multi-dimensional generalization of the well known and broadly used Gini index.\\
By exploiting the mathematical contest offered by our definition, we prove the continuity of the map $\Phi$ which, in turns, allows to prove the validity of a Glivenko-Cantelli theorem for our generalized Gini index and, hence, for the Gini volume. Both results, continuity of $\Phi$ and Glivenko-Cantelli theorem, are particularly useful when dealing with a huge amount of multi-dimensional data.

\end{abstract}

%%%%%%%%%%%%%%%%%%%%%%%%%%%%%%%%%%%%%%%%%%%%%%%%%%%%%%%%%%%%%%%%%%%%%%%

\section{Introduction}
Many problems in the social and system sciences are naturally multivariate and cannot be easily represented with a continuous or parametric approach.\\
An example is the economical production theory, that is, the theory that studies and represents the determinant factors driving production process dynamics. An industry is defined as a set of firms operating within the same sector and we can think about firm productivity as the ``ability" to turn inputs into outputs.\\
The classic approach in production theory is based on a number of assumptions regarding firms behaviour and firm production possibilities, in particular the profit maximization and cost minimization assumption. Following these assumptions, an \emph{ad hoc} parametrized family of production functions is introduced to assess firm productivity and efficiency and to estimate a number of economical indices. Such production functions satisfy, in addition, certain topological properties such as convexity and continuity, thus implying that firms with similar technologies will adopt analogous production techniques or, equivalently, firms tend to be \emph{homogeneous}.\\
Despite these assumptions, a growing availability of longitudinal microdata at firm-level has evidenced the fundamental role of heterogeneity in all relevant aspects regarding firms production activity, thus suggesting a switch from a continuous/parametric approach (which seems to be inadequate in presence of wide asymmetries) to a discrete/nonparametric point of view. Here geometry and geometric measure theory come into help.\\
To  evidence the fragilities of the classic theory, in 1981 Hildenbrand (cf. \cite{Hil}) adopted a different perspective,  by considering the empirical distribution induced by
  a set $X=\left\lbrace y_n\right\rbrace_{n=1,\dots,N}\subset\mathbb{R}^{m+1}_+$ of firms composing the industry (see section \ref{sec:main}  for details), and  introducing a geometric approach, 
 the \emph{zonoid representation}. Geometrically, a zonoid is a centrally symmetric, compact, convex set of the euclidean space which is induced by a Borel measure with finite expectation. In particular, the zonoid induced by the empirical distribution of a given industry is a convex polytope which is called a \emph{zonotope}. Zonotopes can also be written as a sum of line segments, in addition they are dense in the space of zonoids with respect to the topology induced by the Hausdorff metric.\\
More recently,  Dosi, Grazzi, Marengo and Settepanella  in \cite{DGMS} (see also \cite{DGLMS}) adopted Hildenbrand's construction to assess the rate of productivity and technological change of a given industry both on the microeconomic point of view (i.e. firm-level productivity) and on the macroeconomic point of view (i.e. aggregate productivity). Moreover a measure of heterogeneity of the industry, called the \emph{Gini volume}, is introduced. The above approach relies entirely on the geometry of the zonotope induced by the empirical distribution of the industry and it is highly nonparametric. On the other hand the Gini volume can also be seen as a measure of dispersion of the empirical distribution, indeed it is nothing else than a multi-dimensional generalization of the well known Gini index broadly used in social science and economics as measure of statistical dispersion  (see Remark \ref{rem:genG}). \\
The aim of this paper is to look at the Gini volume, i.e. the high-dimensional Gini index, in a slightly more general mathematical contest than the one in \cite{DGMS}.  This broader setting, which includes both, tools of measure theory and geometric properties of zonoids, allows firstly to generalize the definition of Gini volume to a broader class of measures, secondly to prove the validity of a law of large numbers type result for this generalized Gini index. The latter result turn out to be very useful when dealing with huge number of high dimensional data (often the case in applications).\\
Exploiting the dual aspect, provided by the zonoid representation, between the theory of Borel measures with finite first moment and the geometry of convex bodies, we introduce the zonoid map $\Phi\colon\mathcal{M}^n\to\mathcal{Z}^n$,  defined from the space $\mathcal{M}^n$ of all non-negative, finite Borel measures on $\mathbb{R}^n$ with finite first moment to the space $\mathcal{Z}^n$ of zonoids of $\mathbb{R}^n$.   Such map turns out to be continuous and allows us to prove the 
 validity of a Glivenko-Cantelli theorem for the Gini volume.
More precisely, we prove the continuity of $\Phi$ on the subspace of Borel probability measures with support on a compact $K \subset \mathbb{R}^n$ (see Proposition \ref{prop:cont}), which, jointly with a more general result which holds also for closed non compact cases, Theorem \ref{Gini extended}, provides the key ingredient to prove the main result of this paper, Theorem \ref{teo:main}. \\
Another interesting consequence of the continuity of $\Phi$ follows by Theorem \ref{density}, that is, every ``discrete" distribution $\mu$ can be substituted by a suitable ``continuous" distribution $\nu$ in such a way that the zonoid $Z(\nu)=\Phi(\nu)$ is a good approximation of $Z(\mu)=\Phi(\mu)$ at any desirable degree. This seems to suggest that a very large but finite dataset can be approximated with a continuous distribution, which may simplify much of the analysis without a great loss of informations. This will be object of further studies.\\
On the other hand, from the continuity of the map $\Phi$  we can deduce a notion of   robustness for the Gini volume. Indeed, if one consider 
 the empirical distribution induced by a concrete  dataset $X$ (e.g. of technological data),  subject to errors of various kind,  small changes in the values of the distribution lead  to a small change in the related zonoid, which  in turn implies that the Gini volume has a  small change as well. Moreover 
   the robustness of the Gini volume  implies that we can consider random samples  among the available data,  improving the computational aspect of the method.\\
To conclude it is worth to remark that our approach is in the same spirit of the one used in \cite{KM} by Koshevoy and Mosler. Their two generalizations of the Gini index to the multi-dimensional case are slightly different from our,  but  they both have many points in common with our generalization, for instance Corollary \ref{Glivenko zonoid}, and hence an analogous of Theorem \ref{teo:main}, applies to them too.\\
The paper is organized as follows. In Section \ref{sec:prelim} we introduce preliminary and basic notions needed in the rest of the paper. In Section \ref{sec:zonemp} we provide definition of empirical distributions and empirical zonoids proving that a Glivenko-Cantelli theorem for them holds.  
Finally in Section \ref{sec:main} we investigate the zonotope approach in production theory proposed in \cite{Hil} (1981), we generalize the Gini volume introduced in \cite{DGMS} (2016)  and we present a Glivenko-Cantelli result for this new generalized Gini index.

\section{Notation and preliminary results}\label{sec:prelim}
\noindent

A \emph{zonoid} is a convex body of $\mathbb{R}^n$ (i.e. it is compact and convex) which is centrally symmetric  and contains the origin. 
A \emph{zonotope} is a Minkowski sum of a finite number of line segments. In particular a   zonoid  is a polytope if and only if it is a zonotope. In this section we recall their relation with measure theory.
We mainly refer to \cite{Bol},  \cite{Bil}, and \cite{Mos}. For a more detailed presentation of the content of this and the following section in the contest of this paper see \cite{Ter}.

\subsection{An introduction to zonoids}

Let $\mathcal{M}^n$ be the set of all non-negative, finite Borel measures $\mu$ on $\mathbb{R}^n$ (with respect to the euclidean topology) whose first moment $$m(\mu)=\int_{\mathbb{R}^{n}}{x\ d\mu(x)} $$ is finite (here the integration is made component-wise). For every $\mu\in\mathcal{M}^n$, the \emph{zonoid} associated to the measure $\mu$ is the set $$Z(\mu)=\left\lbrace\int_{\mathbb{R}^n}{\phi(x)\cdot x\ d\mu(x)}\middle|\  \phi\colon\mathbb{R}^n\to\left[0,1\right] \textrm{ measurable }\right\rbrace\subseteq\mathbb{R}^n.$$
It can be considered as a geometric representation of the underlying measure: indeed, if we denote with $\mathcal{B}^n$ the class of Borel subsets of $\mathbb{R}^n$, then the zonoid $Z(\mu)$ can be seen as the convex hull of the image of the map $$F\colon\mathcal{B}^n\to\mathbb{R}^n\ ;\ F(B)=\int_{B}{x\ d\mu(x)}.$$ 
The zonoid $Z(\mu)$ is centrally symmetric about $\frac{1}{2}m(\mu)$ (sometimes we may also refer to $m(\mu)$ as the \emph{mean} or the \emph{gravity center} of the distribution).

On the functional point of view, if we denote by $\mathcal{Z}^n$ the set of zonoids of $\mathbb{R}^n$ we can consider the map $$\Phi\colon\mathcal{M}^n\to\mathcal{Z}^n\ ;\ \Phi(\mu)=Z(\mu),$$ which we call the \emph{zonoid map}. The zonoid map satisfies the following properties:
\begin{enumerate}
\item it is a homomorphism of semigroups: $Z(\mu+\nu)=Z(\mu)+Z(\nu)$ for every  $\mu$, $\nu\in\mathcal{M}^n$, where the sum on the right-hand side of the equality is the Minkowski sum;
\item it is positively homogeneous: for every $\alpha>0$ we have $Z(\alpha\mu)=\alpha Z(\mu)$;
\item it is linearly equivariant: for every linear map $L\colon\mathbb{R}^n\to\mathbb{R}^k$ we have $L(Z(\mu))=Z(L_*\mu)$, where $L_*\mu$ is the push-forward measure of $\mu$ with respect to $L$. In particular, the linear image of a zonoid is a zonoid.
\end{enumerate}
In addition, the zonoid map is clearly surjective but on the other hand it is not injective, since every zonoid is induced by a measure with support contained in the unitary sphere $S^{n-1}$ (for a proof, see \cite{Bol}).\\
Since we are mainly interested in probability measures, from now on we will focus our attention on the space of Borel probability measures $\mathcal{P}^n$ equipped with the topology induced by the weak convergence, which  is not a meaningful assumption  since the zonoid map $\Phi$ is positively homogeneous and any measure can be rescaled to a probability measure up to a normalization constant.

\subsection{Zonotopes and zonoids}\label{sub:Zon}
First of all note that  a zonoid is a zonotope if and only if it is induced by a finite atomic measure, i.e. a measure with finite support (cfr. \cite{Bol}).\\
Now, let  $\mathcal{K}^n$  be the set of convex bodies of $\mathbb{R}^n$. It is a classical result that if we equip $\mathcal{K}^n$ with the Hausdorff distance $$d_H(K,L)=\min\left\lbrace \epsilon\geq 0\vert\ K\subseteq L+\epsilon\cdot B^n,\ L\subseteq K+\epsilon\cdot B^n\right\rbrace,$$ where $B^n$ is the unit ball in $\mathbb{R}^n$, then $\left(\mathcal{K}^n, d_H\right)$ is a complete, sequentially compact metric space.\\
By its very definition we have the inclusion $\mathcal{Z}^n\subseteq\mathcal{K}^n$ and moreover, since the set of polytopes is dense in $\mathcal{K}^n$ with respect to the topology induced by the Hausdorff distance,  the subset of zonotopes is dense in $\mathcal{Z}^n$, that is, 
  every zonoid can be arbitrarily approximated (in the Hausdorff metric) by a zonotope, which has both a geometrical and combinatorial nature (see \cite{Bol} for the proof and some geometrical characterizations of a zonotope and \cite{Zie} for the combinatorial aspects). It is worth remarking that in combinatorial geometry there is an identification between zonotopes and arrangements of hyperplanes, although we won't deal with these aspects of the theory. Figure \ref{fig:zon} displays a zonotope generated by 4 line segments in $\mathbb{R}^3$.
\begin{center}
\begin{figure}
\includegraphics[scale=0.3]{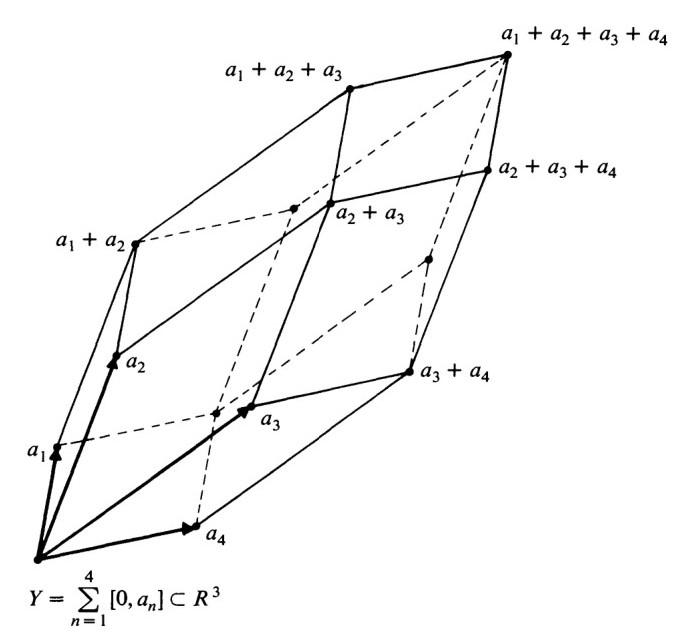}
\caption{Zonotope generated by 4 line segments.}\label{fig:zon}
\end{figure}
\end{center}
\subsection{Weak convergence of Borel distributions}\label{sub:weak}
In this section we will deal with the space of Borel probability measures, which can be equipped with the topology induced by the weak convergence. We also recall some classical facts which are valid in the general case of a complete separable metric space. Missing proofs and further details can be found in \cite{Bil}.\\
For a fixed $K$ non-empty, closed subset of $\mathbb{R}^n$ equipped with the subspace topology, we denote with $\mathcal{P}^n(K)$ the space of Borel probability measures with support contained in $K$. When $K=\mathbb{R}^n$, we simply write $\mathcal{P}^n$ for $\mathcal{P}^n(\mathbb{R}^n)$.\\
We recall that a sequence $\left(\mu_n\right)_{n\in\mathbb{N}}\subset\mathcal{P}^n(K)$ is said to converge \emph{weakly} to $\mu\in\mathcal{P}^n(K)$ if $$\lim_{n\to\infty}\int_{K}{f\ d\mu_n}=\int_{K}{f\ d\mu}$$ for every real-valued, continuous and bounded function $f$ defined on $K$. In this case we write $\mu_n\Rightarrow\mu$.\\
In our exposition, $K$ will be the whole space $\mathbb{R}^n$, a compact subset of it or the octant $\mathbb{R}_+^{n}=\left\lbrace x\in\mathbb{R}^{n}\middle|\ x\geq 0\right\rbrace$ (where the inequality $\geq$ is applied component by component). In this respect, if we denote with $\mathcal{C}^n$ the class of non-empty, compact subsets of $\mathbb{R}^n$,then the Riesz representation theorem for compact Hausdorff spaces implies that, given a compact subset $K\in\mathcal{C}^n$,  then $\mathcal{P}^n(K)$ can be seen as a subset of the space $(C^0(K;\mathbb{R}))^*$ (the  dual of  the space of continuous function $C^0(K;\mathbb{R})$) by means of the map $$\mu\mapsto\phi_{\mu}(\cdot)=\int_K{\cdot\ d\mu}.$$ As a consequence, the weak convergence in $\mathcal{P}^n(K)$ is induced by the weak-topology on $(C^0(K;\mathbb{R}))^*$, since every real-valued continuous function defined on a compact set $K$ is automatically bounded. More in general, without any assumptions of compactness on $K$, it is a classical result that the topology induced by the weak convergence is still metrizable, for instance, by the Prokhorv metric (for further details, see \cite{Bil}).\\
A fundamental example of Borel probability distribution on $\mathbb{R}^n$  is the \emph{Dirac measure}, that is, the probability measure that assigns unitary mass to a single point. Mathematically speaking, we write $\delta_x\in\mathcal{P}^n$ for the Dirac measure concentrated at the point $x\in\mathbb{R}^n$ and defined as follows: $$\delta_x(B)=\begin{cases} 0, & \mbox{if }x\notin B \\ 1, & \mbox{if }x\in B
\end{cases}$$ for every $B$ Borelian subset of $\mathbb{R}^n$.\\
Clearly, the support of the Dirac measure $\delta_x$ coincides with the singleton $\left\lbrace x\right\rbrace$. In addition, the space of convex combinations of Dirac measures $$\mathcal{Q}^n=\left\lbrace \sum_{i=1}^N{\alpha_i\delta_{x_i}}\in\mathcal{P}^n\colon\ N\in\mathbb{N},\ x_1,\dots,x_N\in\mathbb{R}^n,\ \sum_{i=1}^N{\alpha_i}=1,\ \alpha_i\in\left[0,1\right]\right\rbrace$$ 
coincides with the space of \emph{atomic probability measures} (i.e. those distributions with finite support) and by the separability of $\mathbb{R}^n$ the following theorem holds.
\begin{teo}\label{density}
The space of convex combinations of Dirac measures $\mathcal{Q}^n$ is a dense subset of $\mathcal{P}^n$ with respect to the topology induced by the weak convergence. In particular, the space of atomic probability measures is dense in $\mathcal{P}^n$.
\end{teo}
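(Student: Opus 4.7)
The plan is to prove the theorem by an explicit construction: given $\mu \in \mathcal{P}^n$, I build a sequence $(\nu_k)_{k \in \mathbb{N}} \subset \mathcal{Q}^n$ with $\nu_k \Rightarrow \mu$. Since the weak topology on $\mathcal{P}^n$ is metrizable (by the Prokhorov metric, as recalled in Section \ref{sub:weak}), sequential density implies topological density, so producing such a sequence suffices.

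First I would invoke tightness: every Borel probability measure on $\mathbb{R}^n$ is tight by Ulam's theorem, so for every $k \in \mathbb{N}$ there exists a compact $K_k \subset \mathbb{R}^n$ with $\mu(\mathbb{R}^n \setminus K_k) < 1/k$. This reduces the problem to approximating $\mu|_{K_k}$ on a compact set, where continuous bounded functions are automatically uniformly continuous.

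Next, using separability of $\mathbb{R}^n$, I fix a countable dense subset and, for each $k$, cover $K_k$ by finitely many open balls of radius $1/k$ centred at points $x_1^{(k)}, \ldots, x_{N_k}^{(k)}$ of that subset. Taking successive set-differences yields a finite Borel partition $\{A_i^{(k)}\}_{i=1}^{N_k}$ of $K_k$ with $\mathrm{diam}(A_i^{(k)}) < 2/k$, and I may assume $x_i^{(k)} \in A_i^{(k)}$ (discarding empty cells). Picking an arbitrary $x_0 \in \mathbb{R}^n$, I define
\[
\nu_k \;=\; \mu(\mathbb{R}^n \setminus K_k)\,\delta_{x_0} \;+\; \sum_{i=1}^{N_k} \mu(A_i^{(k)})\,\delta_{x_i^{(k)}}.
\]
The weights sum to $\mu(\mathbb{R}^n \setminus K_k) + \mu(K_k) = 1$, so $\nu_k$ is a finite convex combination of Dirac measures and hence belongs to $\mathcal{Q}^n$.

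To verify $\nu_k \Rightarrow \mu$, I test against an arbitrary continuous bounded function $f$ with $\|f\|_\infty \leq M$. Splitting the integrals over $K_k$ and its complement and using uniform continuity of $f$ on $K_k$, I obtain the bound
\[
\left|\int f\,d\nu_k - \int f\,d\mu\right| \;\leq\; \sum_{i=1}^{N_k}\int_{A_i^{(k)}} \bigl|f(x_i^{(k)}) - f(x)\bigr|\,d\mu(x) \;+\; 2M\,\mu(\mathbb{R}^n \setminus K_k),
\]
whose first term is controlled by the modulus of continuity of $f|_{K_k}$ at scale $2/k$ and whose second term is at most $2M/k$. Both vanish as $k \to \infty$, which gives $\nu_k \Rightarrow \mu$ and completes the argument.

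The only genuinely non-routine ingredient is the tightness reduction; everything else is bookkeeping on finite partitions. Without tightness there would be no way to control the tail of $\mu$ on the non-compact space $\mathbb{R}^n$, and the naive partition argument that works on a compact $K$ would fail because a continuous bounded function on $\mathbb{R}^n$ need not be uniformly continuous.
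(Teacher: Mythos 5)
The paper itself gives no proof of Theorem \ref{density} (it is stated as a consequence of separability, with details left to \cite{Bil}), so your explicit construction is welcome, and it is the standard one: the tightness reduction, the partition of $K_k$ into cells of diameter $<2/k$, and the definition of $\nu_k$ are all fine. A cosmetic point: after taking set differences the centre $x_i^{(k)}$ need not lie in its cell $A_i^{(k)}$, so you should simply pick an arbitrary representative of each non-empty cell, which is all the diameter bound requires.

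There is, however, one genuine gap in the convergence step. You bound the first term by the modulus of continuity of $f|_{K_k}$ at scale $2/k$ and assert that this vanishes as $k\to\infty$. That is not justified: the compact set $K_k$ changes with $k$, and a bounded continuous $f$ on $\mathbb{R}^n$ need not be uniformly continuous, so $\sup\{|f(x)-f(y)|:x,y\in K_k,\ \|x-y\|\le 2/k\}$ can fail to tend to $0$. For instance, with $n=1$ and $f(x)=\sin(x^3)$, any admissible choice of $K_k$ containing $[-k,k]$ makes this supremum equal to $2$ for all large $k$, since $f$ runs through a full period on intervals of length $2/k$ near $x=k$; nothing in your construction forbids such a choice of $K_k$. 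The repair is the usual interchange of quantifiers: fix $\epsilon>0$ and a single compact $K$ with $\mu(\mathbb{R}^n\setminus K)<\epsilon$; split $\sum_i\int_{A_i^{(k)}}|f(x_i^{(k)})-f(x)|\,d\mu(x)$ into the contributions of $A_i^{(k)}\cap K$ and of $A_i^{(k)}\setminus K$. The latter is at most $2M\epsilon$, while on the former both $x$ and $x_i^{(k)}$ lie (for $k\ge 2$) in the fixed compact $K^{1}=\{y:\operatorname{dist}(y,K)\le 1\}$, on which $f$ \emph{is} uniformly continuous, so that contribution is at most $\omega_{f|_{K^{1}}}(2/k)\to 0$. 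Letting $k\to\infty$ and then $\epsilon\to 0$ gives $\nu_k\Rightarrow\mu$. With this modification your argument is complete.
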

\noindent
The Dirac measure and Theorem \ref{density} play an important role in the next and in the last section of this paper.
\subsection{Continuity of the zonoid map} A family of measures $\left(\mu_i\right)_{i\in I}$ in $\mathcal{M}^n$ is \emph{uniformly integrable} if $$\lim_{\beta\to\infty}{\sup_{i\in I}{\int_{\parallel x\parallel\geq\beta}{\parallel x \parallel\ d\mu_i(x)}}}=0.$$\\
The following theorem, corollary of a more general result related to lift zonoids\footnote{For a more detailed discussion on lift zonoids in the contest of this work we refer the interested reader to \cite{Ter}.} (see Section 2.4 of \cite{Mos}), holds.
\begin{teo}\label{zonoid continuity}
Let $\left(\mu_k\right)_{k\in\mathbb{N}}$, $\mu\in\mathcal{M}^n$. If $\left(\mu_k\right)$ is uniformly integrable and $\mu_k\Rightarrow\mu$, then $Z(\mu_k)\xrightarrow{d_H}Z(\mu)$.
\end{teo}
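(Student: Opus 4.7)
The plan is to reduce Hausdorff convergence to uniform convergence of support functions on the unit sphere $S^{n-1}$, and then establish the latter via a pointwise convergence + equicontinuity argument.

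\textbf{Step 1: Support function representation.} First I would write the support function of $Z(\mu)$ explicitly. Since $Z(\mu)=\{\int\phi(x)x\,d\mu(x):\phi\colon\mathbb{R}^n\to[0,1]\}$, for $u\in\mathbb{R}^n$ the quantity $\langle\int\phi x\,d\mu,u\rangle=\int\phi(x)\langle x,u\rangle\,d\mu(x)$ is maximized by choosing $\phi=\mathbbm{1}_{\{\langle x,u\rangle>0\}}$, yielding
\[
h_{Z(\mu)}(u)=\int_{\mathbb{R}^n}\bigl(\langle x,u\rangle\bigr)_+\,d\mu(x).
\]
Since Hausdorff convergence of convex bodies is equivalent to uniform convergence of the associated support functions on $S^{n-1}$, it suffices to show $h_{Z(\mu_k)}\to h_{Z(\mu)}$ uniformly on $S^{n-1}$.

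\textbf{Step 2: Pointwise convergence.} For each fixed $u\in S^{n-1}$ the map $f_u(x)=(\langle x,u\rangle)_+$ is continuous and satisfies $0\le f_u(x)\le\|x\|$. The standard extension of the portmanteau theorem says that if $\mu_k\Rightarrow\mu$ and the sequence $(f_u\,d\mu_k)$ is uniformly integrable, then $\int f_u\,d\mu_k\to\int f_u\,d\mu$. Uniform integrability follows directly from the hypothesis on $(\mu_k)$: for every $\beta>0$,
\[
\int_{\{f_u\ge\beta\}}f_u\,d\mu_k \le \int_{\{\|x\|\ge\beta\}}\|x\|\,d\mu_k(x),
\]
and the right-hand side tends to $0$ uniformly in $k$ (and in $u$) as $\beta\to\infty$. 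Hence $h_{Z(\mu_k)}(u)\to h_{Z(\mu)}(u)$ pointwise.

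\textbf{Step 3: Equi-Lipschitzianity and Arzelà--Ascoli.} For every convex body $K\subset\mathbb{R}^n$, $|h_K(u)-h_K(v)|\le\|u-v\|\cdot\max_{x\in K}\|x\|$. Applied to $K=Z(\mu_k)$, this gives a Lipschitz constant on $S^{n-1}$ bounded by $\int\|x\|\,d\mu_k(x)$. The uniform integrability hypothesis together with weak convergence (and the pointwise statement of Step~2 applied to $\|x\|$ via truncation) guarantees that $\sup_k\int\|x\|\,d\mu_k<\infty$, so the family $\{h_{Z(\mu_k)}\}$ is equi-Lipschitz and uniformly bounded on the compact set $S^{n-1}$. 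By Arzelà--Ascoli, the pointwise convergence of Step~2 upgrades to uniform convergence on $S^{n-1}$, which by Step~1 is exactly $Z(\mu_k)\xrightarrow{d_H}Z(\mu)$.

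\textbf{Main obstacle.} The delicate point is Step~2: the integrand $f_u(x)=(\langle x,u\rangle)_+$ is unbounded, so weak convergence $\mu_k\Rightarrow\mu$ alone does not yield convergence of the integrals. This is precisely where the uniform integrability assumption is used, via a standard truncation argument: split $f_u=f_u\wedge\beta + (f_u-\beta)_+$, use weak convergence on the bounded continuous part $f_u\wedge\beta$, control the tail uniformly in $k$ by uniform integrability, and let $\beta\to\infty$. Everything else (the support-function identity and the Arzelà--Ascoli step) is then routine.
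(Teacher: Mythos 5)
Your proof is correct, and it takes a genuinely different route from the paper, which offers no argument at all for Theorem \ref{zonoid continuity}: there the statement is simply presented as a corollary of a more general result on lift zonoids cited from Section 2.4 of \cite{Mos}. Your self-contained argument --- computing the support function $h_{Z(\mu)}(u)=\int_{\mathbb{R}^n}(\langle x,u\rangle)_+\,d\mu(x)$, getting pointwise convergence on $S^{n-1}$ by truncation plus the uniform integrability hypothesis, and upgrading to uniform convergence (hence Hausdorff convergence) via the equi-Lipschitz bound with constant $\int\|x\|\,d\mu_k$ --- is essentially the standard proof underlying the cited reference, and all three steps check out. Two details are worth making explicit if you write it up: first, in Step 3 the bound $\sup_k\int\|x\|\,d\mu_k<\infty$ follows directly, since uniform integrability yields a $\beta_0$ with $\sup_k\int_{\|x\|\geq\beta_0}\|x\|\,d\mu_k\leq 1$ while weak convergence tested against $f\equiv 1$ bounds the total masses $\mu_k(\mathbb{R}^n)$, so that $\int\|x\|\,d\mu_k\leq\beta_0\,\mu_k(\mathbb{R}^n)+1$; second, the truncation argument in Step 2 also requires $\int(f_u-\beta)_+\,d\mu\to 0$ as $\beta\to\infty$ for the limit measure, which holds by dominated convergence because $\mu\in\mathcal{M}^n$ has finite first moment. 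What your approach buys is a proof readable without the lift-zonoid machinery; what the paper's citation buys is brevity and access to the more general lift-zonoid statement.
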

\noindent
Observe in particular that a family of $\mathcal{M}^n$ is uniformly integrable when there exists a compact set $K$ of $\mathbb{R}^n$ which includes the support of all the measures of the family.\\
Now, let $\mathcal{P}^n_1(K):=\mathcal{P}^n(K)\cap\mathcal{M}^n$ be the space of probability measures with finite first moment and whose support is contained in a closed subset $K$ of $\mathbb{R}^{n}$. Note that  we have the equality $\mathcal{P}^n_1(K)=\mathcal{P}^n(K)$ when $K$ is compact. In particular,  a family of measures $\left(\mu_i\right)_{i\in I}$ in $\mathcal{P}^n(K)$ is always uniformly integrable when $K$ is compact.
Whence, as a corollary of Theorem \ref{zonoid continuity}, we have the following 
\begin{prop}[Countinuity on compact sets]\label{prop:cont}
For every $K\in\mathcal{C}^n$, the zonoid map $$\Phi\colon\mathcal{P}^n(K)\to\mathcal{Z}^n\ ;\ \Phi(\mu)=Z(\mu)$$ is continuous.
\end{prop}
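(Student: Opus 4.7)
The plan is to reduce Proposition \ref{prop:cont} to a direct application of Theorem \ref{zonoid continuity}, exploiting the fact that compactness of $K$ forces uniform integrability for free. First, since $K$ is compact, the weak topology on $\mathcal{P}^n(K)$ is metrizable (via the weak-$*$ identification with a subset of $(C^0(K;\mathbb{R}))^{*}$ provided by Riesz, or more generally via the Prokhorov metric mentioned in Subsection \ref{sub:weak}). Hence it suffices to establish sequential continuity: given a sequence $(\mu_k)_{k\in\mathbb{N}}\subset\mathcal{P}^n(K)$ with $\mu_k\Rightarrow\mu$, I need to show that the limit $\mu$ still lies in $\mathcal{P}^n(K)$ and that $Z(\mu_k)\xrightarrow{d_H}Z(\mu)$.

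The first point is immediate from the portmanteau theorem: since $K$ is closed, $\mu(K^c)\leq\liminf_k\mu_k(K^c)=0$, so $\mu\in\mathcal{P}^n(K)$. For the second, the only hypothesis of Theorem \ref{zonoid continuity} that is not already given is uniform integrability of the family $(\mu_k)$. But this is trivial in the compact setting: choose $R>0$ with $K\subseteq\{x\in\mathbb{R}^n:\|x\|\leq R\}$; then for every $\beta>R$ and every $k$ the integrand $\|x\|\mathbf{1}_{\{\|x\|\geq\beta\}}$ vanishes $\mu_k$-almost everywhere, so
$$\sup_{k}\int_{\|x\|\geq\beta}\|x\|\,d\mu_k(x)=0,$$
which obviously tends to $0$ as $\beta\to\infty$. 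This is the very remark already recorded in the paragraph preceding the statement.

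With both hypotheses in place, Theorem \ref{zonoid continuity} yields $Z(\mu_k)\xrightarrow{d_H}Z(\mu)$, that is, $\Phi(\mu_k)\to\Phi(\mu)$ in $(\mathcal{Z}^n,d_H)$, and by metrizability this sequential statement upgrades to continuity of $\Phi$. I do not expect any serious obstacle here: the proposition is essentially a packaging of Theorem \ref{zonoid continuity} in the special case where uniform integrability is automatic, and the only conceptual checks are the closedness argument guaranteeing $\mu\in\mathcal{P}^n(K)$ and the metrizability of the weak topology justifying the passage from sequential to topological continuity.
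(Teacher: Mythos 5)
Your argument is exactly the paper's: uniform integrability is automatic for measures supported in the compact set $K$, so Theorem \ref{zonoid continuity} gives sequential continuity, which equals continuity since both spaces are metric. The extra details you supply (the explicit bound $\beta>R$ and the portmanteau check that $\mu\in\mathcal{P}^n(K)$) are correct refinements of the same proof, not a different route.
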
 
\begin{proof}
Every family of measures with support contained in a compact set is uniformly integrable. Hence, by Theorem \ref{zonoid continuity} the map $\Phi$ is a sequentially continuous map between two metric spaces, in particular it is a continuous map.
\end{proof}
\noindent
As aforementioned, beside the case in which $K$ is a compact set, it is of common interest the case in which $K$ coincides with $\mathbb{R}_+^{n}=\left\lbrace x\in\mathbb{R}^{n}\middle|\ x\geq 0\right\rbrace$.\\
Set $\mathcal{P}_1^+=\mathcal{P}_1^n(\mathbb{R}_+^{n})$. We are  interested in  describing  another sufficient condition, beside uniform integrability,  so that a family $\left(\mu_k\right)_{k\in\mathbb{N}}$ of measures in $\mathcal{P}_1^+$ needs to satisfy in order  to obtain a convergence result. 
\noindent
With this aim we recall  that  a sequence $\left(\mu_k\right)_{k\in\mathbb{N}}\subset\mathcal{P}_1=\mathcal{P}_1^n(\mathbb{R}^{n})$ is said to be convergent \emph{in mean} to $\mu\in\mathcal{P}_1$ (write $\mu_k\xrightarrow{\mathcal{M}}\mu$) if it converges weakly to $\mu$ and the sequence $\left( m(\mu_k)\right)$ converges to $m(\mu)$ for $k\to\infty$.
\begin{teo}\label{zonoid mean continuity}
Given $\left(\mu_k\right)_{k\in\mathbb{N}}\subset\mathcal{P}_1^+$ and $\mu\in\mathcal{P}_1^+$, then $\mu_k\xrightarrow{\mathcal{M}}\mu$ implies $Z(\mu_k)\xrightarrow{d_H}Z(\mu)$.
\end{teo}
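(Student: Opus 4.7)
The plan is to derive the conclusion from Theorem \ref{zonoid continuity}: since $\mu_k \xrightarrow{\mathcal{M}} \mu$ already delivers $\mu_k \Rightarrow \mu$, it suffices to show that the hypothesis $m(\mu_k) \to m(\mu)$, combined with support in the non-negative orthant, forces the family $(\mu_k)$ to be uniformly integrable. Once that is established, Theorem \ref{zonoid continuity} yields $Z(\mu_k)\xrightarrow{d_H}Z(\mu)$ directly.

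To prove uniform integrability, first I would reduce to a one-dimensional problem by pushing forward along the continuous map $s\colon \mathbb{R}_+^n \to \mathbb{R}_+$, $s(x) = x_1 + \cdots + x_n = \|x\|_1$. Set $\nu_k = s_\ast\mu_k$ and $\nu = s_\ast\mu$. The continuous mapping theorem gives $\nu_k \Rightarrow \nu$, and by the hypothesis on the means, $\int t\,d\nu_k = \sum_i m_i(\mu_k) \to \sum_i m_i(\mu) = \int t\,d\nu$. Since $\|x\|_2 \leq \|x\|_1$ and $\{\|x\|_2 \geq \beta\} \subseteq \{\|x\|_1 \geq \beta\}$ on $\mathbb{R}_+^n$, one has
\[
\int_{\|x\|\geq\beta}\|x\|\,d\mu_k(x) \;\leq\; \int_{\|x\|_1\geq\beta}\|x\|_1\,d\mu_k(x) \;=\; \int_\beta^\infty t\,d\nu_k(t),
\]
so it is enough to show $\sup_k \int_\beta^\infty t\,d\nu_k(t) \to 0$ as $\beta \to \infty$.

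The crucial one-dimensional step uses a truncation trick. For each $M>0$ the function $t \mapsto t\wedge M$ is continuous and bounded on $\mathbb{R}_+$, so weak convergence gives $\int (t\wedge M)\,d\nu_k \to \int (t\wedge M)\,d\nu$. Subtracting this from $\int t\,d\nu_k \to \int t\,d\nu$ (where all quantities are finite) yields
\[
\int (t-M)_+\,d\nu_k(t) \;\longrightarrow\; \int (t-M)_+\,d\nu(t) \qquad (k\to\infty).
\]
Since $t \leq 2(t-\beta/2)_+$ whenever $t \geq \beta$, one has the pointwise bound $\int_\beta^\infty t\,d\nu_k \leq 2\int (t - \beta/2)_+\,d\nu_k$, so controlling $\int (t-M)_+\,d\nu_k$ uniformly in $k$ suffices.

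Given $\varepsilon>0$, dominated convergence ($\int t\,d\nu < \infty$) lets me pick $M_0$ with $\int (t-M_0)_+\,d\nu < \varepsilon/4$; then choose $K$ so that $\int (t-M_0)_+\,d\nu_k < \varepsilon/2$ for all $k\geq K$. For the finitely many indices $k<K$, each $\nu_k$ has finite first moment, so I can enlarge $M_0$ to some $M_\ast \geq M_0$ so that $\int (t-M_\ast)_+\,d\nu_k < \varepsilon/2$ for those $k$ as well, giving $\sup_k \int (t-M_\ast)_+\,d\nu_k \leq \varepsilon/2$. Then $\sup_k \int_{2M_\ast}^\infty t\,d\nu_k \leq \varepsilon$, which proves uniform integrability of $(\nu_k)$ and hence of $(\mu_k)$. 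The main obstacle is precisely this last truncation-and-tail estimate: without the hypothesis on the first moments, the family can leak mass to infinity (as in $\mu_k = (1-1/k)\delta_0 + (1/k)\delta_{ke_1}$), so the argument must genuinely exploit both weak convergence \emph{and} convergence of means in the linearity of subtracting $\int(t\wedge M)\,d\nu_k$ from $\int t\,d\nu_k$; after that, the handling of the finitely many bad indices is routine.
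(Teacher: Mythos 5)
Your proposal is correct, but it is genuinely different from what the paper does: the paper gives no argument at all for Theorem \ref{zonoid mean continuity} and simply defers to Hildenbrand's article \cite{Hil}. You instead derive the statement from Theorem \ref{zonoid continuity} by showing that, for measures supported in $\mathbb{R}^{m+1}_+$, weak convergence together with convergence of the means forces uniform integrability. The reduction to one dimension via $s(x)=\|x\|_1$ is legitimate because $\|x\|_1=\sum_i x_i$ is linear on the orthant, so $\int t\,d\nu_k=\sum_i m_i(\mu_k)$ converges by hypothesis; the truncation identity $t-(t\wedge M)=(t-M)_+$ combined with weak convergence of the bounded continuous integrand $t\wedge M$ gives $\int(t-M)_+\,d\nu_k\to\int(t-M)_+\,d\nu$, and the elementary bound $t\le 2(t-\beta/2)_+$ on $\{t\ge\beta\}$ converts this into the tail estimate defining uniform integrability (the finitely many small indices being handled by enlarging $M$, which only decreases $(t-M)_+$ and so does not disturb the estimate for $k\ge K$). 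All steps check out, and your counterexample $\mu_k=(1-1/k)\delta_0+(1/k)\delta_{ke_1}$ correctly isolates where positivity of the support enters. What your approach buys is a self-contained proof showing that, within the paper's own framework, Theorem \ref{zonoid mean continuity} is a corollary of Theorem \ref{zonoid continuity} rather than an independent imported result; what the paper's citation buys is brevity and an appeal to the original economic-theory source. One cosmetic point: you should state explicitly that $\beta\mapsto\sup_k\int_\beta^\infty t\,d\nu_k$ is nonincreasing, so that the bound at $\beta=2M_\ast$ indeed yields the limit $0$ as $\beta\to\infty$ required by the definition of uniform integrability.
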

\begin{proof}
See \cite{Hil}
\end{proof}
Remark that for any $K$ compact subset of $\mathbb{R}^{n}$, a sequence $\left(\mu_k\right)_{k\in\mathbb{N}}\subset\mathcal{P}(K)$ is convergent in mean to $\mu\in\mathcal{P}(K)$ if and only if it is weakly convergent to $\mu$. 

\section{Zonoids related to empirical distributions}\label{sec:zonemp}
 We begin with a definition:
\begin{Def}
Let $X=\left\lbrace y_k\right\rbrace_{k=1,\dots,N}\subset\mathbb{R}^{n}$ be a finite set. The \emph{empirical distribution} of $X$ is the Borel measure $$\widehat{\mu}=\frac{1}{N}\sum_{k=1}^N{\delta_{y_k}},$$ the zonoid related to the empirical distribution $Z\left(\widehat{\mu}\right)$ is the \emph{empirical zonoid}.
\end{Def}
As noticed in Subsection \ref{sub:Zon}, since $\widehat{\mu}$ is a measure with finite support then the induced  empirical zonoid $Z(\widehat{\mu})$ is indeed a zonotope.\\
In many application contexts, the empirical distribution is induced by a dataset $X$ of technological data which are subject to errors of various kind. Hence, it is desirable that a small change in the distribution should lead only to a small change in the related zonoid or, equivalently, that the map $\Phi$ should satisfy a continuity result. This is quite useful when one needs to rely on samples, for instance when the collection of technological data (e.g. the production activity of an industry in several countries) is time consuming and costly. 
In this respect,  in Proposition \ref{prop:cont} we have already stated a continuity result for zonoids in the compact case. Analogous result can be stated for the non compact case $\mathcal{P}_1^+$.
The following version of Glivenko-Cantelli Theorem for separable metric spaces, whose proof can be found in \cite{Var}, holds.
\begin{teo}[Glivenko-Cantelli]\label{Glivenko-Cantelli}
Let $\left(E,d\right)$ be a separable metric space and $X_1,X_2,\dots$ be independent $E$-valued random variables with distribution $\mu$ (we consider on $E$ the $\sigma$-field of Borelian subsets). Let $\widehat{\mu}_N$ be the empirical measure $$\widehat{\mu}_N=\frac{1}{N}{\sum_{i=1}^N{\delta_{X_i}}};$$ then we have $\widehat{\mu}_N\Rightarrow\mu$ for $N\to\infty$ with probability 1.
\end{teo}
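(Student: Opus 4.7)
The standard approach to the Glivenko-Cantelli theorem in the abstract setting of a separable metric space is to combine the classical Strong Law of Large Numbers (SLLN) with a suitable countable convergence-determining family of test functions. The plan is as follows.

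First, I would exploit the separability of $(E,d)$ to construct a countable family $\mathcal{F} = \{f_k\}_{k \in \mathbb{N}}$ of continuous, bounded, real-valued functions on $E$ that is \emph{convergence-determining}, in the sense that for any sequence $(\nu_N)$ of Borel probability measures on $E$, pointwise convergence $\int f_k \, d\nu_N \to \int f_k \, d\mu$ for every $k$ forces $\nu_N \Rightarrow \mu$. A concrete realisation is obtained by fixing a countable dense set $D \subseteq E$ and taking the bounded Lipschitz functions $x \mapsto \max(0, 1 - n \cdot d(x, y))$ with $y \in D$ and $n \in \mathbb{N}$, together with their rational linear combinations and the constant function $1$. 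The verification that this countable family determines weak convergence passes through the Portmanteau theorem (equivalently, through the metrizability of the weak topology by the bounded-Lipschitz metric on a separable metric space).

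Second, for each fixed $f_k$ the real-valued random variables $f_k(X_1), f_k(X_2), \dots$ are i.i.d. and bounded, hence integrable, with common expectation $\int_E f_k \, d\mu$. Kolmogorov's SLLN then yields
$$\int_E f_k \, d\widehat{\mu}_N = \frac{1}{N}\sum_{i=1}^N f_k(X_i) \xrightarrow{\text{a.s.}} \int_E f_k \, d\mu,$$
so there exists an event $A_k$ with $\mathbb{P}(A_k) = 1$ on which the convergence holds. Setting $A = \bigcap_{k \in \mathbb{N}} A_k$ and invoking countable additivity gives $\mathbb{P}(A) = 1$, and on $A$ the integrals against every element of $\mathcal{F}$ converge simultaneously. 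By the convergence-determining property of $\mathcal{F}$, this forces $\widehat{\mu}_N \Rightarrow \mu$ on $A$, which is exactly the almost-sure weak convergence claimed.

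The main obstacle is the first step: isolating a countable family of test functions whose integrals detect weak convergence. The passage from the full, uncountable class of continuous bounded functions to a countable subfamily is what makes separability indispensable, since otherwise the naive intersection of null sets (one per test function) would not leave a set of full measure. Once this countable family is in hand, the remaining two steps reduce to routine applications of the SLLN and of Portmanteau-type equivalences.
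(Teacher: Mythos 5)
The paper does not prove this statement at all: it is quoted as a known result, with the proof deferred to the cited reference \cite{Var} (Varadarajan, 1958). Your argument is the standard proof of that result and is essentially the one found in the literature: separability yields a countable convergence-determining class, Kolmogorov's SLLN handles each test function individually, and a countable intersection of full-measure events finishes the job; so there is nothing to compare against in the paper itself, and your overall architecture is sound. One detail deserves care, though: the specific family you propose, namely rational linear combinations of the tent functions $x\mapsto\max\bigl(0,\,1-n\,d(x,y)\bigr)$ with $y$ in a countable dense set $D$, is not obviously convergence-determining as stated. To run the Portmanteau argument you need, for an open set $G$, test functions sandwiched between $\mathbf{1}_{F}$ and $\mathbf{1}_{G}$ for closed sets $F$ exhausting $G$ in $\mu$-measure; a positive linear combination of tents supported in $G$ is bounded above only by a constant multiple of $\mathbf{1}_{G}$, which yields $\liminf_N \nu_N(G)\geq C^{-1}\int f\,d\mu$ rather than $\liminf_N \nu_N(G)\geq\mu(F)$. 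The standard fix is to use instead a countable family of ``plateau'' functions such as $h_{U,m}(x)=\min\bigl(1,\,m\cdot d(x,U^{c})\bigr)$ for $U$ ranging over finite unions of balls with centers in $D$ and rational radii (or, equivalently, to close your family under the lattice operation $\min(\cdot,1)$). With that adjustment the proof is complete and matches the cited one.
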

\noindent
Notice that Theorem \ref{Glivenko-Cantelli} implies that the empirical zonoid which is derived from a large sample of the true distribution $\mu$ will yield a good approximation of $Z(\mu)$. A consequence of Theorem  \ref{Glivenko-Cantelli} and Theorem \ref{zonoid mean continuity} is the following corollary.
\begin{corollario}\label{Glivenko zonoid}
Let $X_1,X_2,\dots$ be independent $\mathbb{R}^{n}_+$-valued random variables with distribution $\mu\in\mathcal{P}_1^+$. Let $\widehat{\mu}_N$ be the empirical measure $$\widehat{\mu}_N=\frac{1}{N}{\sum_{i=1}^N{\delta_{X_i}}};$$ then we have $$Z(\widehat{\mu}_N)\xrightarrow{d_H}Z(\mu)$$ with probability 1.
\end{corollario}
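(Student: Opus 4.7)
The plan is to combine the two theorems the paper has just set up, namely Glivenko--Cantelli (Theorem~\ref{Glivenko-Cantelli}) and continuity in mean of the zonoid map on $\mathcal{P}_1^+$ (Theorem~\ref{zonoid mean continuity}). Since the latter requires convergence in mean, while Glivenko--Cantelli only supplies weak convergence, the real content of the argument is to upgrade weak convergence of the empirical measures to convergence in mean, almost surely.

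First, I would apply Theorem~\ref{Glivenko-Cantelli} with $E=\mathbb{R}^n_+$ (which is a separable metric space) and the i.i.d.\ sample $X_1,X_2,\dots$ with common distribution $\mu$. This yields an event $A_1$ of probability $1$ on which $\widehat{\mu}_N \Rightarrow \mu$. Second, I would observe that by definition of the empirical measure,
\[
m(\widehat{\mu}_N) \;=\; \int_{\mathbb{R}^n} x\, d\widehat{\mu}_N(x) \;=\; \frac{1}{N}\sum_{i=1}^N X_i.
\]
Since $\mu\in\mathcal{P}_1^+$, each component of $X_1$ has finite expectation, and componentwise $\mathbb{E}[X_1]=m(\mu)$. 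The strong law of large numbers applied to each coordinate then produces an event $A_2$ of probability $1$ on which $m(\widehat{\mu}_N)\to m(\mu)$.

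On the intersection $A_1\cap A_2$, which still has probability $1$, both $\widehat{\mu}_N\Rightarrow\mu$ and $m(\widehat{\mu}_N)\to m(\mu)$ hold, so by the definition recalled just before Theorem~\ref{zonoid mean continuity} we have $\widehat{\mu}_N \xrightarrow{\mathcal{M}} \mu$. Applying Theorem~\ref{zonoid mean continuity} pointwise on $A_1\cap A_2$ gives $Z(\widehat{\mu}_N)\xrightarrow{d_H} Z(\mu)$ with probability $1$, which is exactly the claim.

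I do not expect a serious obstacle: the argument is essentially a packaging of two well-known results, with the small technical point being the need to intersect two probability-$1$ events so that weak convergence and convergence of the mean vectors occur on the same sample path — this is needed because Theorem~\ref{zonoid mean continuity} is a \emph{deterministic} statement about sequences in $\mathcal{P}_1^+$ and cannot be applied directly to the stochastic weak convergence alone. Once this is handled, the reduction to Theorem~\ref{zonoid mean continuity} is immediate.
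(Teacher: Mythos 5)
Your argument is correct and is essentially the paper's own proof: both combine the strong law of large numbers (for $m(\widehat{\mu}_N)\to m(\mu)$) with Theorem~\ref{Glivenko-Cantelli} to obtain $\widehat{\mu}_N\xrightarrow{\mathcal{M}}\mu$ almost surely, and then invoke Theorem~\ref{zonoid mean continuity}. Your explicit intersection of the two probability-one events is a small but welcome clarification that the paper leaves implicit.
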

\begin{proof}
The usual law of large numbers implies $m(\widehat{\mu}_N)\xrightarrow[]{\parallel\cdot\parallel}m(\mu)$ with probability 1, hence we can combine it with Theorem \ref{Glivenko-Cantelli} to conclude that $\widehat{\mu}_N\xrightarrow{\mathcal{M}}\mu$ with probability 1 and thus the thesis follows by Theorem \ref{zonoid mean continuity}.
\end{proof}
\noindent
To conclude we remark 
 that Corollary \ref{Glivenko zonoid} can actually be extended to $X_1,X_2,\dots$ independent $\mathbb{R}^{n}$-valued random variables with distribution $\mu\in\mathcal{P}_1$ (for a proof, see \cite{Mos}).

\section{Applications to Production Theory: the generalized  Gini index}\label{sec:main}
\noindent
In recent years, a wide literature based upon empirical analyses has robustly evidenced the permeating presence of heterogeneity in all relevant aspects of the dynamics of  production processes.  
Recently, Dosi, Grazzi, Marengo and Settepanella  (see  \cite{DGMS}),  introduced the  \emph{Gini Volume}, a new non parametric index to assess the degree of heterogeneity of an industry. Their construction is based on the paper   \cite{Hil} by Hildebrand, in which the author  applies the theory of zonoids to the one of industrial production.\\
In this section we recall the definition of such  index, we provide a slight generalization by  means of the zonoid representation and we prove the validity of a Glivenko-Cantelli type result.

\subsection{The zonotope approach} 
In 1981, Hildenbrand suggested a geometrical representation of a given industry. Such representation is highly nonparametric and it is based upon observed production activity, that is, every industry is represented as a set $$X=\left\lbrace y_n\right\rbrace_{n=1,\dots,N}\subset\mathbb{R}^{m+1}_+,$$ where:
\begin{itemize}
\item $N$ is the number of productive units (i.e. the firms) making up the industry;
\item every point $y_n$ is called the \emph{observed} production activity of the $n$-th firm;
\item the first $m$ coordinates of $y_n$ represent the input quantities adopted by the $n$-th firm and the last coordinate is the output quantity produced under the period of observation (we say we are in the $m$-input, 1-output case)\footnote{We slightly changed notation with respect to the previous sections replacing $\mathbb{R}^n$ with $\mathbb{R}^{m+1}$ to be consistent with notation in \cite{Hil} and \cite{DGMS}.}.
\end{itemize}
Let $X=\left\lbrace y_n\right\rbrace_{n=1,\dots,N}\subset\mathbb{R}^{m+1}_+$ be a fixed set which represents a given industry.  In \cite{Hil} Hildebrand defines the \emph{production set} of the $n$-th firm as the line segment $$\left[0,y_n\right].$$ The \emph{size} of the $n$-th firm is the euclidean norm of the vector $\vv{0y_n},$  $\parallel y_n\parallel$.\\
Notice that the definition of production set corresponds, roughly speaking, to the assumption that each firm doesn't change its production activity under the period of observation, thus it can be seen as a first order approximation of the problem. In \cite{Hil} there is a geometric representation of the industry $X$ from the aggregate point of view.
\begin{Def} 
The \emph{short-run total production set} of the industry $X$ is the Minkowski sum of the production set of each firm, that is, the zonotope $$Z=\sum_{n=1}^N{\left[0,y_n\right]}.$$
\end{Def}
Consider the empirical measure of the industry $X$, that is, the measure $$\widehat{\mu}=\frac{1}{N}\sum_{n=1}^N{\delta_{y_n}}.$$ We recall that $\widehat{\mu}$ is a probability measure with finite support, hence it is an atomic probability with finite mean and we have $\widehat{\mu}\in\mathcal{P}_1^+$. As noted by Hildenbrand, for every Borelian set $B$ the quantity $100\cdot \widehat{\mu}(B)$ can be seen as the percentage of production units having their characteristics in the set $B$.
\begin{Def}
The \emph{short-run mean production set} of the industry $X$ is the zonoid $Z(\widehat{\mu})$, where $\widehat{\mu}$ is the empirical distribution of $X$.
\end{Def}
The term ``mean" adopted in the above definition follows from the observation that $Z(\widehat{\mu})$ is a homothetic copy of the short-run total production set $Z$, indeed we have $$Z=N\cdot Z(\widehat{\mu}).$$
\begin{obs}
As a convex body, every zonoid $Z(\mu)$ is uniquely determined by its \emph{support function}, defined as follows: $$\psi_{\mu}\colon\mathbb{R}^n\to\mathbb{R}\ ;\ \psi_{\mu}(\xi)=\sup\left\lbrace \left\langle x,\xi\right\rangle\middle|\ x\in Z(\widehat{\mu})\right\rbrace.$$ It is an interesting fact that in \cite{Hil}, an economic interpretation of the support function of $Z(\widehat{\mu})$ is given: if we write $\xi=\left( -\xi_1,\dots,-\xi_m,\xi_{m+1}\right)\in\mathbb{R}^{m+1}$, then the quantity $\psi_{\widehat{\mu}}(\xi)=\sup\left\lbrace \left\langle x,\xi\right\rangle\middle|\ x\in Z(\widehat{\mu})\right\rbrace\ $ can be considered as the maximum mean profit with respect to the price system $\xi$ subject to the technological restrictions defined by the mean production set $Z(\widehat{\mu})$.
\end{obs}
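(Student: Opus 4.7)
The statement has two independent components. The first is the purely convex-geometric assertion that a zonoid (being a convex body) is uniquely determined by its support function. The second is the economic interpretation of $\psi_{\widehat{\mu}}(\xi)$ once coordinates are labelled as inputs and outputs and $\xi$ is read as a signed price vector. Neither requires a deep new argument, so I would present the remark as a short justification rather than a genuine proof.

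For the first part, my plan is to invoke the classical recovery formula
$$K=\bigcap_{\xi\in\mathbb{R}^n}\bigl\{x\in\mathbb{R}^n :\ \langle x,\xi\rangle\leq \psi_K(\xi)\bigr\},$$
valid for every non-empty closed convex set $K\subseteq\mathbb{R}^n$. This is a direct consequence of the supporting hyperplane theorem (itself a corollary of Hahn--Banach): any $y\notin K$ is strictly separated from $K$ by a closed half-space $\{\langle\cdot,\xi\rangle\leq c\}$ with $c<\langle y,\xi\rangle$, and optimality forces $c=\psi_K(\xi)$. Applied to $K=Z(\mu)\in\mathcal{Z}^n$ (which is closed and convex by definition), this yields that the assignment $Z(\mu)\mapsto\psi_\mu$ is injective on $\mathcal{K}^n$, hence also on $\mathcal{Z}^n$.

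For the second part, I would simply unfold the scalar product. Writing $x=(x_1,\dots,x_m,x_{m+1})\in Z(\widehat{\mu})\subseteq\mathbb{R}_+^{m+1}$, with the first $m$ coordinates understood as (mean) input quantities and the last as the (mean) output, and viewing $\xi_1,\dots,\xi_m\geq 0$ as input unit prices and $\xi_{m+1}\geq 0$ as the output unit price, the computation
$$\langle x,\xi\rangle = \xi_{m+1}x_{m+1}-\sum_{i=1}^{m}\xi_i x_i$$
identifies $\langle x,\xi\rangle$ with revenue minus cost, i.e. the profit of the production plan $x$ under the price system. Taking the supremum over the feasible mean plans $x\in Z(\widehat{\mu})$ therefore gives the maximum mean profit compatible with the industry's mean technology, which is precisely $\psi_{\widehat{\mu}}(\xi)$. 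The legitimacy of $Z(\widehat{\mu})$ as the feasibility region is justified by the identity $Z=N\cdot Z(\widehat{\mu})$ already noted above, which interprets $Z(\widehat{\mu})$ as the per-firm (mean) aggregate production set.

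The main obstacle, such as it is, is not mathematical but expository: one has to take care to separate the geometric role of $\xi$ as a direction in which a linear functional is maximized from its economic role as a signed price vector, and to motivate the convention of negating the first $m$ coordinates of $\xi$. Once these conventions are fixed, both claims reduce to textbook facts.
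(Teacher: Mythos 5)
Your proposal is correct and matches the paper's (implicit) treatment: the paper states this as a remark without proof, relying on exactly the two ingredients you supply --- the classical fact that a convex body is recovered from its support function as $K=\bigcap_{\xi}\{x:\langle x,\xi\rangle\leq\psi_K(\xi)\}$ (standard convex geometry, cf.\ \cite{Sch}), and Hildenbrand's reading of $\langle x,\xi\rangle=\xi_{m+1}x_{m+1}-\sum_{i=1}^m\xi_i x_i$ as profit under the price system $\xi$, taken directly from \cite{Hil}. Nothing is missing; your note on separating the geometric and economic roles of $\xi$ is a fair expository point.
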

Building by Hildenbrand's work,  Dosi, Grazzi, Marengo and Settepanella in \cite{DGMS} introduce a new framework to assess firm level heterogeneity and to study the rate and direction of technical change, which we are now going to examine.
%\notaale{Forse questa parte si può togliere, alla fine mi sembra ridondante}\remove{ In what follows $X=\left\lbrace y_n\right\rbrace_{n=1,\dots,N}\subset\mathbb{R}^{m+1}_+$ is a fixed set which represents a given industry (we consider the $m$-input, 1-output case for simplicity), $\widehat{\mu}$ is the empirical measure related to the industry (i.e. the set) $X$ and we denote with $Z$ and with $Z(\widehat{\mu})$ the short-run total production set and the short-run mean production set of the industry $X$, respectively.}
\subsection{Heterogeneity and Gini volume}
Empirical evidence reports a wide and persistent heterogeneity across firms operating in the same industry, thus the phenomenon requires attention.\\
Intuitively, heterogeneity can be associated in mathematical statistics to the variance, namely it measures how much the industry is far from being homogeneous or, equivalently, how much the various productive units differ from the ``mean" productive unit. 
\begin{Def} Let $X=\left\lbrace y_n\right\rbrace_{n=1,\dots,N}\subset\mathbb{R}^{m+1}_+$  be an industry and let  $Z$
be the related short-run total production set. The \emph{total production activity} is the sum $$\Sigma_Z=\sum_{n=1}^{N}{y_n} \in Z.$$
\end{Def}
Geometrically, the line segment $d_Z:=\left[ 0, \Sigma_Z\right]$ is the main diagonal of the zonotope $Z$ and it seems to be a good candidate to represent the ``mean" productive technology of the industry: indeed we have $$\frac{\Sigma_Z}{N}=m\left(\widehat{\mu}\right),$$ where $m\left(\widehat{\mu}\right)$ is the expectation of the empirical measure $\widehat{\mu}$ related to the industry (i.e. the set) $X$.\\
For a better visualization, let us analyse two limit cases, one the opposite of the other:
\begin{itemize}
\item \textbf{Maximal homogeneity:} every production set lies on the line spanned by the main diagonal $d_Z$. This corresponds to the situation where every production activity adopts the same productive technology and any two of them only differ by their intensities (i.e. their size). In this case, we have $Z=d_Z$, which is a zonotope with null volume;
\item \textbf{Maximal heterogeneity:} production sets are represented by segments on positive semi-axis and the zonotope $Z$ is a parallelotope in $\mathbb{R}^{m+1}$ with diagonal $d_Z$. This case has to be regarded as a limit case: indeed, production sets on positive semi-axis would imply that there are firms with either nonzero inputs and zero output or nonzero output and zero inputs, which is quite absurd. 
\end{itemize}
Building from these two cases,  Dosi et alt. in \cite{DGMS} defines the following index as a candidate measure of heterogeneity:
\begin{Def}
The \emph{Gini volume} for the short run total production set $Z$ induced by the industry $X$ is the ratio $$G(Z)=\frac{V_{m+1}(Z)}{V_{m+1}(P_Z)}\in\mathbb{R},$$ where $P_Z$ is the $(m+1)$-dimensional parallelotope $$P_Z:=\left\lbrace z\in\mathbb{R}^{m+1}\colon\ 0\leq z\leq \sum_{i=1}^N{y_n}=\Sigma_Z\right\rbrace.$$
\end{Def}
Observe that the Gini volume does not depend on the units of measure or the number of firms, thus it allows comparisons across space and time. In addition, we have the inequality $$0\leq G(Z)\leq 1,$$ where the minimum is attained at the maximal homogeneity case and the maximum is attained in the maximal heterogeneity case.
\begin{obs}
Clearly, the inequality $N\geq m+1$ must be satisfied, otherwise the Gini volume would be null (observe that in applications the number $N$ is usually large). When $N\geq m+1$, then we have the equality $$V_{m+1}(Z)=\sum_{i\in I}{\vert \Delta_{i}\vert},$$
where $I=\left\lbrace i=(i_1,\dots ,i_{m+1})\in\mathbb{R}^{m+1}\ \vert\ 1\leq i_1<\dots <i_{m+1}\leq N\right\rbrace$ and $\Delta_i$ is the determinant of the matrix whose rows are the vectors $\left\lbrace y_{i_1},\dots ,y_{i_{m+1}}\right\rbrace$.
On the other hand, we have $$V_{m+1}(P_Z)={\Pi_{i=1}^{m+1}{\left\langle\Sigma_Z, e_i\right\rangle}},$$ where $\left\lbrace e_i\right\rbrace_{i=1,\dots,m+1}$ is the canonical basis and $\left\langle\ ,\ \right\rangle$ is the standard scalar product.
\end{obs}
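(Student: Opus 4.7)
The remark consists of three claims, which I would address sequentially.

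For the first claim, that $N \geq m+1$ is necessary for a nonzero Gini volume: if $N < m+1$, then the zonotope $Z = \sum_{n=1}^N [0, y_n]$ is contained in $\mathrm{span}(y_1,\ldots,y_N)$, a linear subspace of $\mathbb{R}^{m+1}$ of dimension at most $N < m+1$. Consequently its $(m+1)$-dimensional Lebesgue measure vanishes, so $V_{m+1}(Z) = 0$ and hence $G(Z) = 0$.

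For the sum-of-determinants formula for $V_{m+1}(Z)$, I would invoke the classical Shephard--McMullen dissection of a zonotope, available in Bolker \cite{Bol}. Concretely, the zonotope generated by $y_1,\ldots,y_N$ admits an essentially disjoint decomposition into translates of the parallelepipeds $P_i = \{\sum_{k=1}^{m+1} t_k y_{i_k} : t_k \in [0,1]\}$ indexed by increasing multi-indices $i = (i_1, \ldots, i_{m+1}) \in I$. Each $P_i$ has $(m+1)$-dimensional volume $|\Delta_i|$, and tuples with $\Delta_i = 0$ simply contribute nothing, so they can be kept in the sum without harm. Totalling the contributions gives the formula. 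A self-contained alternative proceeds by induction on $N$: writing $Z_N = Z_{N-1} + [0, y_N]$, one uses Cavalieri's principle to express $V_{m+1}(Z_N) - V_{m+1}(Z_{N-1})$ as a sum over $(m+1)$-tuples that involve the new generator $y_N$, which closes the recursion.

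For the parallelotope formula: since every $y_n$ lies in $\mathbb{R}^{m+1}_+$, the sum $\Sigma_Z = \sum_n y_n$ also has nonnegative components, so the definition of $P_Z$ unfolds to the coordinate-axis-aligned box $\prod_{i=1}^{m+1}[0, \langle\Sigma_Z, e_i\rangle]$, whose Lebesgue measure is manifestly the product of its side lengths.

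The main obstacle is the Shephard--McMullen dissection underlying the determinantal identity; the other two claims reduce to elementary observations about dimension and coordinate boxes. I would therefore treat the dissection most cleanly by citation to Bolker for the geometric picture, as the self-contained inductive argument requires some bookkeeping to index the new parallelepipeds contributed at each step.
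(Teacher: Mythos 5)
Your argument is correct: the paper states this remark without proof, and the justifications you supply are the standard ones it implicitly relies on --- a dimension count for the necessity of $N\geq m+1$, the classical Shephard--McMullen dissection of a zonotope into parallelepipeds for the determinantal volume formula, and the observation that $P_Z$ is the coordinate box $\prod_{i=1}^{m+1}[0,\langle\Sigma_Z,e_i\rangle]$. Nothing further is needed.
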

We provide the following continuity result on the Gini volume.  
\begin{teo}\label{Gini continuity}
Let $\mathcal{Z}^{m+1}_+$ be the space of zonotopes $Z$ that  are contained in $\mathbb{R}^{m+1}_+$ and  verify ${V_{m+1}(P_Z)}\neq 0$.
Then the Gini volume, seen as a real-valued function defined on $\mathcal{Z}^{m+1}_+$ equipped with the topology induced by the Hausdorff metric,  is continuous.
\end{teo}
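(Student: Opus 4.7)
The plan is to write $G(Z)=V_{m+1}(Z)/V_{m+1}(P_Z)$ as a quotient and establish continuity of numerator and denominator separately with respect to the Hausdorff metric. Since $V_{m+1}(P_Z)\neq 0$ on $\mathcal{Z}^{m+1}_+$ by definition, the quotient of two continuous functions with nonvanishing denominator will be continuous.

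For the numerator, I would invoke the classical fact that the Lebesgue volume $V_{m+1}\colon(\mathcal{K}^{m+1},d_H)\to\mathbb{R}$ is a continuous functional on the space of all convex bodies equipped with the Hausdorff metric; restricting to the subspace of zonotopes contained in $\mathbb{R}^{m+1}_+$ preserves continuity, so $Z\mapsto V_{m+1}(Z)$ is already continuous.

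The core step is the denominator. The parallelotope $P_Z$ is axis-aligned, $P_Z=\prod_{i=1}^{m+1}[0,(\Sigma_Z)_i]$, hence $V_{m+1}(P_Z)=\prod_{i=1}^{m+1}(\Sigma_Z)_i$. For any zonotope $Z=\sum_n[0,y_n]$ contained in $\mathbb{R}^{m+1}_+$ one may assume the generators satisfy $y_n\in\mathbb{R}^{m+1}_+$ (otherwise $Z$ would contain a point with a negative coordinate), and then
\[
(\Sigma_Z)_i=\sum_n (y_n)_i=\max_{x\in Z}\langle x,e_i\rangle=h_Z(e_i),
\]
because a linear functional on a Minkowski sum of segments is maximized by summing the endpoints that maximize it on each segment, and nonnegativity of the coordinates forces those endpoints to be the $y_n$. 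This realizes each coordinate of $\Sigma_Z$ as a support function value, which depends continuously on $Z$: indeed the map $K\mapsto h_K(\xi)$ is $\|\xi\|$-Lipschitz with respect to $d_H$, so $Z\mapsto(\Sigma_Z)_i=h_Z(e_i)$ is continuous for every $i$, and therefore so is the product $V_{m+1}(P_Z)$. Combining with Step 1 and the hypothesis $V_{m+1}(P_Z)\neq 0$ yields continuity of $G$ on $\mathcal{Z}^{m+1}_+$.

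The main subtle point is justifying that the formula $(\Sigma_Z)_i=h_Z(e_i)$ is an intrinsic property of $Z$ rather than depending on a particular decomposition into generators. This is resolved by the observation above that any element of $\mathcal{Z}^{m+1}_+$ admits generators in $\mathbb{R}^{m+1}_+$, which identifies $\Sigma_Z$ unambiguously with the vector of support function values at the standard basis; after this identification there are no further obstacles, and the continuity of support functions under Hausdorff convergence is entirely standard.
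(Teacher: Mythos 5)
Your proof is correct, and it handles the denominator by a genuinely different (if closely related) route from the paper. The paper also splits $G$ into numerator and denominator and invokes the continuity of the volume functional $V_{m+1}$ on $(\mathcal{K}^{m+1},d_H)$ for the numerator; but for the denominator it proves that the set-valued assignment $Z\mapsto P_Z$ is itself (uniformly) continuous in the Hausdorff metric, via the chain of inclusions $Z\subseteq Z'+\epsilon B^{m+1}\subseteq P_{Z'}+\epsilon B^{m+1}$ and hence $P_Z\subseteq P_{Z'}+\epsilon B^{m+1}$, and then applies the volume functional a second time. You instead bypass $P_Z$ as a convex body entirely: you compute $V_{m+1}(P_Z)=\prod_i(\Sigma_Z)_i$ explicitly and identify each factor as the support function value $h_Z(e_i)$, which is $1$-Lipschitz in $d_H$. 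Your identification $(\Sigma_Z)_i=h_Z(e_i)$ is valid because, as you note, any zonotope of the considered form contained in $\mathbb{R}^{m+1}_+$ has all generators $y_n$ in $\mathbb{R}^{m+1}_+$ (each $y_n$ is a point of $Z$), so $h_{[0,y_n]}(e_i)=\max\{0,(y_n)_i\}=(y_n)_i$ and the support function of the Minkowski sum is the sum of the support functions; this also settles the well-definedness issue you flag, since $h_Z(e_i)$ is manifestly intrinsic to $Z$. What your version buys is a sharper and more elementary denominator estimate (an explicit Lipschitz bound for each coordinate of $\Sigma_Z$, needing only the standard Lipschitz property of support functions rather than the continuity of the volume functional applied to $P_Z$), and it sidesteps a small quantitative imprecision in the paper's argument, where the inclusion of the enlarged box $\prod_i[0,(\Sigma_{Z'})_i+\epsilon]$ into $P_{Z'}+\epsilon B^{m+1}$ actually costs a factor $\sqrt{m+1}$ at the corner; this is harmless for continuity but your route avoids the issue altogether. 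Both arguments then conclude identically by the quotient rule under the standing hypothesis $V_{m+1}(P_Z)\neq 0$.
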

\noindent
In order to prove this theorem we need the following lemma.
\begin{lema}\label{volume functional}
The volume functional $V_{m+1}$ is continuous on the space of convex bodies in $\mathbb{R}^{m+1}$ with respect to the Hausdorff metric.
\end{lema}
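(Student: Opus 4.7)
The plan is to combine the classical Steiner formula with the monotonicity of the $(m+1)$-dimensional Lebesgue measure on convex sets. Fix a sequence $(K_j)_{j\in\mathbb{N}}$ of convex bodies in $\mathbb{R}^{m+1}$ with $K_j \to K$ in the Hausdorff metric, and set $\epsilon_j := d_H(K_j, K)$. By the very definition of $d_H$ recalled in Subsection \ref{sub:Zon}, we have $K_j \subseteq K + \epsilon_j B^{m+1}$ and $K \subseteq K_j + \epsilon_j B^{m+1}$ with $\epsilon_j \to 0$; moreover, since any Hausdorff-convergent sequence is bounded, there is $R > 0$ such that $K$ and every $K_j$ lie in $R\, B^{m+1}$.

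The core analytic input is the Steiner formula, which states that for any convex body $C \subset \mathbb{R}^{m+1}$ and every $\epsilon \geq 0$,
\begin{equation*}
V_{m+1}(C + \epsilon B^{m+1}) = \sum_{i=0}^{m+1} \binom{m+1}{i} W_i(C)\, \epsilon^i,
\end{equation*}
where $W_i(C)$ denotes the $i$-th quermassintegral of $C$. Two facts about this expansion are essential: its constant term equals $V_{m+1}(C)$, and the quermassintegrals are monotone with respect to set inclusion. Since $K$ and all $K_j$ are contained in $R\, B^{m+1}$, I obtain $W_i(K), W_i(K_j) \leq W_i(R\, B^{m+1})$ for each $i$, whence there is a constant $M = M(R,m)$, independent of $j$, such that
\begin{equation*}
V_{m+1}(K + \epsilon B^{m+1}) - V_{m+1}(K) \leq M \epsilon \quad \text{and} \quad V_{m+1}(K_j + \epsilon B^{m+1}) - V_{m+1}(K_j) \leq M \epsilon
\end{equation*}
for all $\epsilon \in [0,1]$ and all $j$.

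Combining these linear bounds with the monotonicity of $V_{m+1}$ and the two initial inclusions gives, for $j$ sufficiently large,
\begin{align*}
V_{m+1}(K_j) &\leq V_{m+1}(K + \epsilon_j B^{m+1}) \leq V_{m+1}(K) + M \epsilon_j, \\
V_{m+1}(K) &\leq V_{m+1}(K_j + \epsilon_j B^{m+1}) \leq V_{m+1}(K_j) + M \epsilon_j.
\end{align*}
Letting $j \to \infty$ so that $\epsilon_j \to 0$ forces $V_{m+1}(K_j) \to V_{m+1}(K)$, which establishes sequential continuity of $V_{m+1}$ on the metric space of convex bodies, and hence continuity.

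The only delicate step — and the place I expect the main care to be needed — is securing the uniformity in $j$ of the $O(\epsilon_j)$ error term appearing in the second inequality. This is exactly what the uniform containment $K_j \subseteq R\, B^{m+1}$ and the monotonicity of the quermassintegrals provide, so the argument goes through without further complications. A completely elementary alternative, should one wish to avoid the Steiner formula, is to observe directly that for convex bodies contained in $R\, B^{m+1}$ the ``tube'' $(C + \epsilon B^{m+1}) \setminus C$ has $(m+1)$-dimensional Lebesgue measure bounded by a constant times $\epsilon$, which yields the same linear estimate and hence the same conclusion.
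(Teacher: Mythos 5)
Your argument is correct. Note, however, that the paper does not actually prove Lemma \ref{volume functional}: it simply defers to Schneider's monograph \cite{Sch}, where continuity of the volume (and, more generally, of all mixed volumes) on the space of convex bodies is established. Your proof is a self-contained rendering of essentially that standard argument: the Steiner polynomial $V_{m+1}(C+\epsilon B^{m+1})=\sum_{i=0}^{m+1}\binom{m+1}{i}W_i(C)\epsilon^i$ has constant term $V_{m+1}(C)$, the quermassintegrals are monotone under inclusion (being mixed volumes), and the uniform containment $K_j\subseteq R\,B^{m+1}$ therefore yields a bound $V_{m+1}(C+\epsilon B^{m+1})-V_{m+1}(C)\leq M\epsilon$ with $M$ independent of $j$, from which the two-sided sandwich and sequential continuity follow. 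You correctly identify the one delicate point, namely the uniformity in $j$ of the $O(\epsilon_j)$ term, and you handle it properly; the elementary alternative you sketch (bounding the Lebesgue measure of the tube $(C+\epsilon B^{m+1})\setminus C$ linearly in $\epsilon$ for bodies in a fixed ball) is also valid and avoids quermassintegrals entirely. In short, your proposal supplies a complete proof where the paper gives only a citation, and it buys the reader independence from \cite{Sch} at the cost of invoking the Steiner formula.
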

A proof can be found in \cite{Sch}.
\begin{proof}[Proof of Theorem \ref{Gini continuity}]
Since the volume functional is continuous by Lemma \ref{volume functional}, the only thing left to prove is the continuity of the map $$Z\mapsto P_Z.$$
Indeed, the function is also uniformly continuous, in fact for every couple of zonotopes $Z$, $Z'$ with $d_H(Z,Z')\leq\epsilon$ we have $$Z\subseteq Z'+\epsilon\cdot B^{m+1}\subseteq P_{Z'}+\epsilon\cdot B^{m+1},$$ hence the inclusion $$P_Z\subseteq P_{Z'}+\epsilon\cdot B^{m+1}$$ follows easily from the definition of $P_Z$. Clearly we can exchange the roles of $Z$ and $Z'$ to get the inequality $$d_H(P_Z,P_{Z'})\leq\epsilon.$$
\end{proof}
The above defined Gini volume can be expressed even in terms of the empirical distribution $\widehat{\mu}$ of the set $X$:
\begin{obs}\label{Gini index and empirical measure}
Note that, for every $\mu\in\mathcal{P}_1^+$, the associated zonoid $Z(\mu)$ is contained in the $m+1$-dimensional parallelotope 
\begin{equation}
P(\mu):=\left\lbrace z\in\mathbb{R}^{m+1}\colon\ 0\leq z\leq m(\mu)\right\rbrace,
\end{equation}
 where $\leq$ is applied component by component. In this respect we have the equality $$G(Z)=\frac{V_{m+1}(Z(\widehat{\mu}))}{V_{m+1}(P(\widehat{\mu}))}=G\left(Z\left(\widehat{\mu}\right)\right),$$ which can be easily deduced from the relations $Z=N\cdot Z(\widehat{\mu})$ and $P_Z=N\cdot P(\widehat{\mu})$. In particular, we have ${V_{m+1}(P_Z)}\neq 0$ if and only if the expectation $m(\widehat{\mu})\in\mathbb{R}^{m+1}_+$ is a vector with strictly positive coordinates.
\end{obs}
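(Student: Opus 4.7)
The plan is to verify each of the three interlocking claims in the remark by unpacking the definition of $Z(\mu)$ together with the homogeneity and additivity properties of the zonoid map already listed in Section 2.1. First, I would establish the inclusion $Z(\mu)\subseteq P(\mu)$ for every $\mu\in\mathcal{P}_1^+$ directly from the definition. Any point of $Z(\mu)$ has the form $\int_{\mathbb{R}^{m+1}}\phi(x)\,x\,d\mu(x)$ for some measurable $\phi:\mathbb{R}^{m+1}\to[0,1]$. Since $\mu$ is supported on $\mathbb{R}^{m+1}_+$, the integrand is component-wise non-negative, yielding the lower bound $0$; for the upper bound, $\phi\leq 1$ and $x_i\geq 0$ give $\int\phi(x)x_i\,d\mu\leq\int x_i\,d\mu = m(\mu)_i$, which is exactly the condition defining $P(\mu)$.

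Second, I would derive the scaling identities $Z=N\cdot Z(\widehat{\mu})$ and $P_Z=N\cdot P(\widehat{\mu})$. For the zonoid, properties (1) and (2) of the zonoid map give $Z(\widehat{\mu})=\frac{1}{N}\sum_{n=1}^N Z(\delta_{y_n})$, and a direct computation shows $Z(\delta_{y})=[0,y]$ because $\int\phi(x)\,x\,d\delta_y=\phi(y)\cdot y$ with $\phi(y)\in[0,1]$. Summing gives $Z(\widehat{\mu})=\frac{1}{N}\sum_n[0,y_n]=\frac{1}{N}Z$. For the parallelotope, $m(\widehat{\mu})=\frac{1}{N}\sum_n y_n=\frac{1}{N}\Sigma_Z$, so $P(\widehat{\mu})$ is the rescaling of $P_Z$ by the factor $1/N$ along each axis, whence $P_Z=N\cdot P(\widehat{\mu})$. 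Taking $(m+1)$-dimensional volumes, the factor $N^{m+1}$ cancels in the ratio, producing $G(Z)=V_{m+1}(Z(\widehat{\mu}))/V_{m+1}(P(\widehat{\mu}))=G(Z(\widehat{\mu}))$.

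Finally, the formula $V_{m+1}(P_Z)=\prod_{i=1}^{m+1}\langle\Sigma_Z,e_i\rangle$ recalled just before, combined with $\Sigma_Z=N\cdot m(\widehat{\mu})$, shows that $V_{m+1}(P_Z)$ vanishes if and only if some coordinate of $m(\widehat{\mu})$ is zero; equivalently, non-vanishing is equivalent to all coordinates of $m(\widehat{\mu})\in\mathbb{R}^{m+1}_+$ being strictly positive. No step is particularly delicate; the whole argument is essentially a direct translation between the two normalizations (the unit-mass measure $\widehat{\mu}$ versus the total sum $\Sigma_Z$). The only place demanding a bit of care is the atomic base case $Z(\delta_y)=[0,y]$, which is what makes the additivity/homogeneity argument run.
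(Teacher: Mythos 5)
Your proof is correct and follows exactly the route the paper sketches: the inclusion $Z(\mu)\subseteq P(\mu)$ straight from the definition of the zonoid, the scaling relations $Z=N\cdot Z(\widehat{\mu})$ and $P_Z=N\cdot P(\widehat{\mu})$ via additivity, positive homogeneity and the base case $Z(\delta_y)=[0,y]$, and the cancellation of $N^{m+1}$ in the volume ratio. The paper leaves these verifications implicit ("can be easily deduced"), so your write-up is simply a fuller version of the same argument.
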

\subsection{The generalized Gini index}
Remark \ref{Gini index and empirical measure} suggests an extension of the Gini volume definition to the set of zonoids induced by $\mathcal{P}_1^+$:
\begin{Def}\label{def:gini}
Let $\mu\in\mathcal{P}_1^+$ be a Borel distribution such that $m(\mu)$ is a vector with strictly positive coordinates. The {\em Gini index} related to $\mu$ is the ratio $$G\left(Z\left(\mu\right)\right)=\frac{V_{m+1}(Z(\mu))}{V_{m+1}(P(\mu))},$$ where $P(\mu)$ is the parallelotope defined in Remark \ref{Gini index and empirical measure}.
\end{Def}
\begin{obs}\label{rem:genG}
Let $\mu\in\mathcal{P}_1^1$ be a univariate probability distribution with support contained in $\mathbb{R}_+$  and such that $m(\mu)\neq 0$ (equivalently $m(\mu)>0$). Consider the \emph{lifted measure} induced by $\mu$, that is, the bivariate probability distribution $$\overline{\mu}=\delta_1\otimes\mu,$$ where $\delta_1\in\mathcal{P}_1^1$ is the Dirac measure which assigns unitary mass to the point 1. Observe that we can write $\overline{\mu}\in\mathcal{P}_1^+$ if we set $m+1=2$.\\
In \cite{Mos} it is proved that the zonoid $Z(\overline{\mu})$ (which is also called the \emph{lift zonoid} induced by $\mu$) is a bidimensional convex body bordered by two curves, the \emph{generalized Lorenz curve} and the \emph{dual generalized Lorenz curve} induced by $\mu$.
We recall that the generalized Lorenz curve induced by the distribution $\mu$ is  defined as $$L_{\mu}(t)=\left(t,\int_0^t{Q_\mu(s)\ ds}\right)\ ,\ 0\leq t\leq 1,$$ where $Q_{\mu}(s)$ is the quantile function of $\mu$: $$Q_{\mu}(s)=\inf\left\lbrace x\in\mathbb{R}\colon\ \mu\left(\left]-\infty,x\right]\right)\geq s\right\rbrace,$$ whereas the dual generalized Lorenz curve is obtained by symmetrization of the generalized Lorenz curve with respect to the center of symmetry of $Z(\overline{\mu})$, that is, the point $C=\left(\frac{1}{2},\frac{1}{2}m(\mu)\right)\in\mathbb{R}^2$.
%Recall the usual Lorenz curve is defined as $$L(t)=\left(t,\frac{1}{m(\mu)}\int_0^t{Q_\mu(s)\ ds}\right)\ ,\ 0\leq t\leq 1,$$
Figure \ref{fig:gini} shows the zonoid $Z(\overline{\mu})$ and the parallelotope $P(\overline{\mu})$ when $\mu$ is the exponential distribution with parameter 1, that is, when $\mu=Exp(1)$.
\begin{center}
\begin{figure}
\includegraphics[scale=0.19]{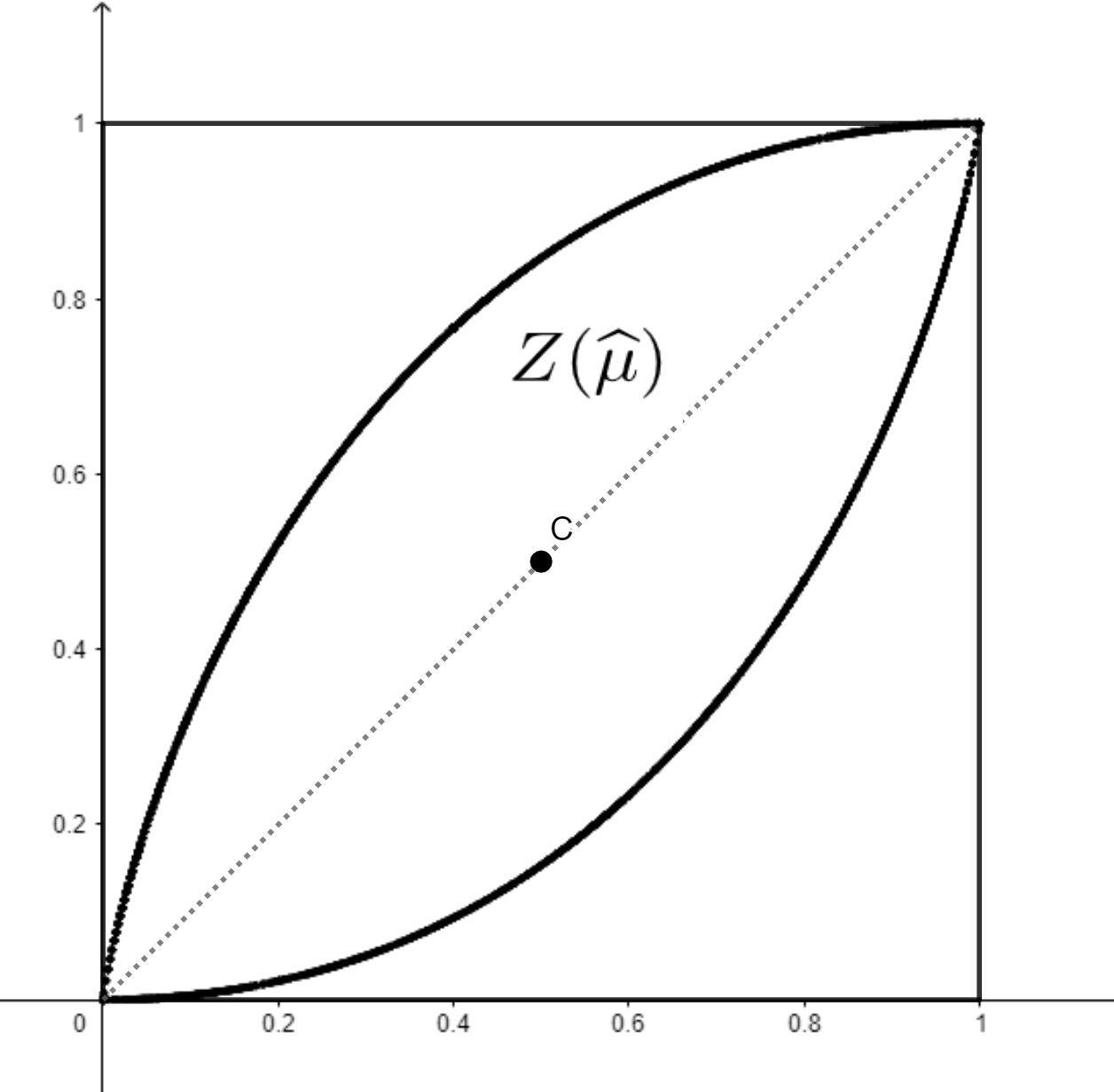}
\includegraphics[scale=0.175]{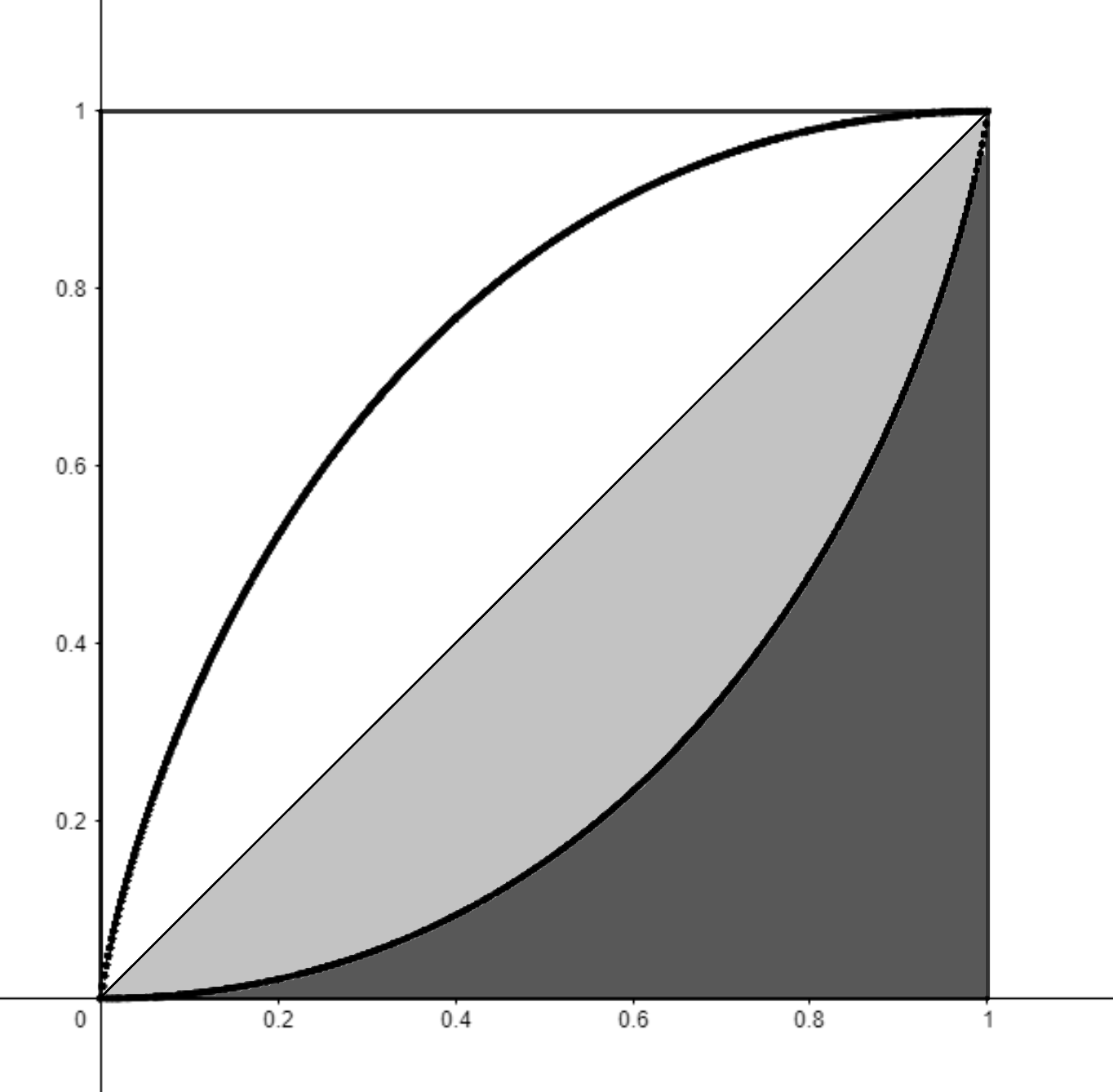}
\caption{Lorenz curve.}\label{fig:gini}
\end{figure}
\end{center}
The generalized Lorenz curve is represented by the lower curve below the dotted line displayed in the figure (which corresponds to the segment whose endpoints are the origin and the point $\left(1,m(\mu)\right)$), whereas the dual generalized Lorenz curve is represented by the upper curve above the dotted line. On the other hand, the rectangle (the square) containing the zonoid in Figure \ref{fig:gini} coincides with the 2-dimensional parallelotope $P(\overline{\mu})$.\\
On the right, the light grey surface represents the portion of plane between the dotted line and the generalized Lorenz curve, whereas the dark grey surface represents the portion of $P(\overline{\mu})$ which is situated below the generalized Lorenz curve. By a symmetry argument, we can observe that the proposed generalization in Definition \ref{def:gini} graphically coincides with the ratio between the area of the light grey surface and the area of the dark grey surface united with the light grey surface, hence the term \emph{generalized} Gini index referred to Definition \ref{def:gini} is justified.
%In the figure above, the rectangle (the square) in figure that contains the zonoid coincides with $P(\widehat{\mu})$, hence we can easily deduce that the index $$G\left(Z\left(\mu\right)\right)=\frac{V_{2}(Z(\mu))}{V_{2}(P(\mu))},$$ coincides with the ratio between the surface of the light grey area
% is the graph of $$\widehat{DL}\colon t\mapsto\int_{1-t}^1{Q_\mu(s)\ ds}\ ,\ 0\leq t\leq 1,$$ where $Q_{\mu}(s)$ is the quantile function of $\mu$: $$Q_{\mu}(s)=\inf\left\lbrace x\in\mathbb{R}\colon\ \mu\left(\left]-\infty,x\right]\right)\geq s\right\rbrace.$$
\end{obs}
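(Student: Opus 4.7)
The statement to substantiate is the identification at the end of Remark \ref{rem:genG}: for a univariate $\mu \in \mathcal{P}_1^1$ supported in $\mathbb{R}_+$ with $m(\mu) > 0$, the generalized Gini index $G(Z(\overline{\mu}))$ of the lifted measure $\overline{\mu} = \delta_1 \otimes \mu$ equals the classical univariate Gini index of $\mu$, that is, the ratio between the ``light grey'' area $A$ (lying between the main diagonal of $P(\overline{\mu})$ and the Lorenz curve) and the area of $A \cup B$ (the triangle below the diagonal inside $P(\overline{\mu})$, where $B$ is the ``dark grey'' region situated below $L_\mu$).

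The plan is as follows. First, I would invoke the result from \cite{Mos} already recalled in the Remark: the planar zonoid $Z(\overline{\mu})$ is the convex body bordered below by the Lorenz curve $L_\mu(t) = \bigl(t,\int_0^t Q_\mu(s)\,ds\bigr)$ and above by its dual, both joining $(0,0)$ to $(1, m(\mu))$. Since every zonoid is centrally symmetric, $Z(\overline{\mu})$ is invariant under the central reflection $p \mapsto 2C - p$ about $C = \bigl(\tfrac12,\tfrac12 m(\mu)\bigr)$; and the diagonal of $P(\overline{\mu}) = [0,1]\times[0,m(\mu)]$ passes through $C$. Therefore this reflection swaps the two halves into which the diagonal divides $Z(\overline{\mu})$; the halves have equal area, and the lower one coincides with $A$. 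Consequently $V_2\bigl(Z(\overline{\mu})\bigr) = 2\,\lvert A\rvert$.

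Two elementary planar identities complete the computation: $V_2\bigl(P(\overline{\mu})\bigr) = m(\mu)$, and the diagonal bisects $P(\overline{\mu})$, so $\lvert A\rvert + \lvert B\rvert = m(\mu)/2$. Substituting into Definition \ref{def:gini},
\[ G\bigl(Z(\overline{\mu})\bigr) \;=\; \frac{V_2(Z(\overline{\mu}))}{V_2(P(\overline{\mu}))} \;=\; \frac{2\,\lvert A\rvert}{m(\mu)} \;=\; \frac{\lvert A\rvert}{\lvert A\rvert+\lvert B\rvert}, \]
which is the classical Gini index of $\mu$. The only non-routine ingredient is the identification of the boundary of $Z(\overline{\mu})$ with the pair of Lorenz curves, imported from \cite{Mos}; once this is taken for granted, the remainder is central symmetry plus a single area identity, and I do not anticipate any serious obstacle.
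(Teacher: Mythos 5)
Your argument is correct and is precisely the ``symmetry argument'' the paper invokes without spelling out: the central symmetry of $Z(\overline{\mu})$ about $C=\bigl(\tfrac12,\tfrac12 m(\mu)\bigr)$ forces the diagonal to bisect the zonoid, giving $V_2(Z(\overline{\mu}))=2\lvert A\rvert$, and combined with $V_2(P(\overline{\mu}))=m(\mu)=2(\lvert A\rvert+\lvert B\rvert)$ this yields the stated ratio. No discrepancy with the paper's (implicit) reasoning.
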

\noindent
By means of results showed in section 2 and 3 applied to the generalized Gini index in Definition \ref{def:gini} we obtain the following  continuity result that, in particular, applies to the index of heterogeneity proposed in \cite{DGMS}. 
\begin{teo}\label{Gini extended}
Let $\left(\mu_k\right)_{k\in\mathbb{N}}\subset\mathcal{P}_1^+$, $\mu\in\mathcal{P}_1^+$ be Borel distributions such that $V_{m+1}(P(\mu))\neq 0$ and $V_{m+1}(P(\mu_k))\neq 0$ for every index $k$, where $P(\mu)$ is the parallelotope defined in Remark \ref{Gini index and empirical measure}. If $\mu_k\xrightarrow{\mathcal{M}}\mu,$ then the sequence $G(Z(\mu_k))$ is convergent to $G(Z(\mu))$.
\end{teo}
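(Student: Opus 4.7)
The plan is to decompose the problem into controlling the numerator and the denominator of the ratio $G(Z(\mu)) = V_{m+1}(Z(\mu))/V_{m+1}(P(\mu))$ separately, and then conclude by continuity of division away from zero. Since $\mu_k \xrightarrow{\mathcal{M}} \mu$, Theorem \ref{zonoid mean continuity} applies directly and gives $Z(\mu_k) \xrightarrow{d_H} Z(\mu)$. Since every zonoid is a convex body, Lemma \ref{volume functional} then yields $V_{m+1}(Z(\mu_k)) \to V_{m+1}(Z(\mu))$, which handles the numerator.

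For the denominator, I would first note that convergence in mean implies in particular that the first moments converge: $m(\mu_k) \to m(\mu)$ in $\mathbb{R}^{m+1}$. The parallelotope $P(\mu)$ is, by its definition in Remark \ref{Gini index and empirical measure}, the Cartesian product $[0, m(\mu)_1] \times \cdots \times [0, m(\mu)_{m+1}]$, so it depends continuously on $m(\mu)$ in the Hausdorff metric; a short estimate $d_H(P(\mu_k), P(\mu)) \leq \|m(\mu_k) - m(\mu)\|$ does the job. Applying Lemma \ref{volume functional} again gives $V_{m+1}(P(\mu_k)) \to V_{m+1}(P(\mu))$. Alternatively, one can just observe that $V_{m+1}(P(\mu)) = \prod_{i=1}^{m+1} \langle m(\mu), e_i\rangle$ is a polynomial in the coordinates of $m(\mu)$, so continuity is immediate.

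Having established convergence of both numerator and denominator, the hypothesis $V_{m+1}(P(\mu)) \neq 0$ ensures that the limit ratio is well defined, and the continuity of the map $(a,b) \mapsto a/b$ at $(V_{m+1}(Z(\mu)), V_{m+1}(P(\mu)))$ (with $b \neq 0$) yields the claimed convergence $G(Z(\mu_k)) \to G(Z(\mu))$. The assumption $V_{m+1}(P(\mu_k)) \neq 0$ is needed only to make each $G(Z(\mu_k))$ well defined.

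I do not anticipate a genuine obstacle here: the conceptual content is packaged in the two ingredients cited earlier (Theorem \ref{zonoid mean continuity} for the zonoid and Lemma \ref{volume functional} for the volume), and the only slightly subtle point is the continuity of $\mu \mapsto P(\mu)$, which reduces to the continuity of $\mu \mapsto m(\mu)$ under convergence in mean. The latter is built into the very definition of $\xrightarrow{\mathcal{M}}$, so this step is essentially a bookkeeping check rather than a real difficulty.
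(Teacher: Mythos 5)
Your proposal is correct and follows essentially the same route as the paper's proof: Theorem \ref{zonoid mean continuity} for the convergence $Z(\mu_k)\xrightarrow{d_H}Z(\mu)$, the observation that $\mu_k\xrightarrow{\mathcal{M}}\mu$ forces $P(\mu_k)\xrightarrow{d_H}P(\mu)$, and then continuity of the volume functional and of division away from zero. You merely spell out the details (the Hausdorff estimate for the parallelotopes and the explicit product formula for $V_{m+1}(P(\mu))$) that the paper leaves implicit.
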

\begin{proof}
The proof follows immediately by Theorem \ref{zonoid mean continuity} and the observation that if $\mu_k\xrightarrow{\mathcal{M}}\mu,$ then $P(\mu_k)\xrightarrow{d_H}P(\mu)$.
\end{proof}
\noindent
Finally we present a Glivenko-Cantelli type result, which may be used in a more general contest, beside the production theory one.
\begin{teo}\label{teo:main}
Let $\mu\in\mathcal{P}_1^+$ be a Borel distribution such that the expectation $m(\mu)$ is a vector with strictly positive coordinates and let $X_1,X_2,\dots$ be independent $\mathbb{R}^{m+1}_+$-valued random variables with distribution $\mu$. Let $\widehat{\mu}_N$ be the empirical measure $$\widehat{\mu}_N=\frac{1}{N}{\sum_{i=1}^N{\delta_{X_i}}};$$ then the sequence $G(Z(\widehat{\mu}_N))$ is eventually defined and it is convergent to $G(Z(\mu))$ with probability 1.
\end{teo}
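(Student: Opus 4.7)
The plan is to assemble Theorem \ref{teo:main} from the pieces already developed: Corollary \ref{Glivenko zonoid} (or rather its ingredients, i.e., the Glivenko--Cantelli theorem together with the strong law of large numbers) and the continuity result of Theorem \ref{Gini extended}. Essentially we need to show that the empirical measures $\widehat{\mu}_N$ satisfy, with probability $1$, the hypotheses of Theorem \ref{Gini extended}.

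I would first deal with the "eventually defined" clause. By the strong law of large numbers applied component-by-component, $m(\widehat{\mu}_N) \to m(\mu)$ with probability $1$. Since by hypothesis $m(\mu)$ has strictly positive coordinates and $m$ is continuous, there almost surely exists some $N_0$ such that $m(\widehat{\mu}_N)$ has strictly positive coordinates for every $N \geq N_0$. Using the formula recorded after Definition \ref{def:gini}, namely $V_{m+1}(P(\widehat{\mu}_N)) = \prod_{i=1}^{m+1} \langle m(\widehat{\mu}_N), e_i\rangle$, this guarantees that $V_{m+1}(P(\widehat{\mu}_N)) \neq 0$ for $N \geq N_0$, so $G(Z(\widehat{\mu}_N))$ is well defined from index $N_0$ onward.

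Next, I would invoke Theorem \ref{Glivenko-Cantelli} on the separable metric space $\mathbb{R}^{m+1}$ to obtain $\widehat{\mu}_N \Rightarrow \mu$ with probability $1$. Combining this weak convergence with the almost sure convergence $m(\widehat{\mu}_N) \to m(\mu)$ from the strong law of large numbers (the same step used in the proof of Corollary \ref{Glivenko zonoid}), we conclude that $\widehat{\mu}_N \xrightarrow{\mathcal{M}} \mu$ with probability $1$.

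Finally, working on the almost sure event where both (i) $\widehat{\mu}_N \xrightarrow{\mathcal{M}} \mu$ and (ii) $V_{m+1}(P(\widehat{\mu}_N)) \neq 0$ for all sufficiently large $N$ hold, the hypotheses of Theorem \ref{Gini extended} are satisfied by the tail of the sequence $(\widehat{\mu}_N)$, and applying that theorem yields $G(Z(\widehat{\mu}_N)) \to G(Z(\mu))$. Since each of these events has probability $1$, the conclusion holds with probability $1$. The only mild subtlety is the bookkeeping around the possibly undefined initial terms, which is immediately resolved by the observation in the first step; there is no real obstacle here, as the theorem is essentially a synthesis of the two preceding convergence results.
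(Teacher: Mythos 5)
Your proposal is correct and follows essentially the same route as the paper's own proof: the strong law of large numbers handles the ``eventually defined'' clause via the positivity of $m(\widehat{\mu}_N)$, and the conclusion then follows by combining Theorem \ref{Glivenko-Cantelli} with Theorem \ref{Gini extended}. The extra detail you supply (making explicit that weak convergence plus convergence of means gives $\widehat{\mu}_N\xrightarrow{\mathcal{M}}\mu$, as in Corollary \ref{Glivenko zonoid}) is exactly what the paper leaves implicit.
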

\begin{proof}
Observe that, since we have $\mu\in\mathcal{P}_1^+$, the expectation $m(\mu)$ is a vector with strictly positive coordinates if and only if the parallelotope $P(\mu)$ has non-empty interior or, equivalently, if and only if $V_{m+1}\left(P(\mu)\right)\neq 0$. By the usual law of large numbers we have $m(\widehat{\mu}_N)\rightarrow m(\mu)$ with probability 1, hence the sequence of parallelotopes $P(\widehat{\mu}_N)$ has eventually non-empty interior and thus the index $G(Z(\widehat{\mu}_N))$ is eventually well defined almost surely. At this point, we can conclude by Theorem \ref{Glivenko-Cantelli} and Theorem \ref{Gini extended}.
\end{proof}

\end{document}